\documentclass[a4paper,11pt]{article}
\usepackage[latin1]{inputenc}
\usepackage{graphicx}
%
%
\usepackage{amsmath}
\usepackage{amsthm}
\usepackage{amssymb}
\usepackage{amsfonts}
\usepackage{stmaryrd}
\newtheorem{thm}{Theorem}

\newtheorem{la}[thm]{Lemma}

\newtheorem{prop}[thm]{Proposition}
\newtheorem{obs}[thm]{Observation}

\begin{document}

\begin{center}
{\Large Star Decompositions of a Cyclic Polygon}
\end{center}
\begin{center}
{\large Tomoki Nakamigawa
}
\end{center}
\begin{center}
{
Department of Information Science \\%
Shonan Institute of Technology \\%
1-1-25 Tsujido-Nishikaigan, Fujisawa, \\%
Kanagawa 251-8511, Japan \\%
e-mail: {nakamigwt@gmail.com}
}
\end{center}

\begin{abstract}
Let $V$ be a set of vertices on a circumference in the plane.
Let $E$ be a set of directed line segments linking two vertices of $V$.
If $E$ forms a set of closed cycles and for all two adjacent edges $uv$ and $vw$, the vertices $u$, $v$, $w$ are arranged in anti-clockwise order, we call $P(V,E)$ a cyclic polygon.
A star decomposition $\mathcal{S}$ of a cyclic polygon $P$ is a set of star polygons partitioning the region of $P$ with some additional diagonals.
A star decomposition $\mathcal{S}$ is called maximal if there is no other star decomposition $\mathcal{S}'$ such that a set of diagonals of $\mathcal{S}$ is a proper subset of that of $\mathcal{S}'$.

In this paper, it is shown that for any two maximal star decompositions $\mathcal{S}_1$ and $\mathcal{S}_2$ of a common cyclic polygon, $\mathcal{S}_1$ can be transformed into  $\mathcal{S}_2$ by a finite sequence of diagonal flips.
It is also shown that if a cyclic polygon $P$ admits a star decomposition, the number of diagonals contained in a maximal star decomposition of $P$ is $p - (n-2r)(n-2r-1)/2$, where $p$ is the number of all possible diagonals of $P$, $n$ is the number of vertices of $P$, and $r$ is the rotation number of $P$.  
\end{abstract}
keywords: cyclic polygon, star polygon, star decomposition, generalized triangulation, rotation number
\medskip\\
MSC 2020 classification: 52C15, 05B40

\newpage

\section{Introduction}
A {\it geometric digraph} $D = D(V, E)$ is a directed graph such that the vertex set $V$ is a point set in the plane, and the edge set $E \subset V \times V$ is a set of directed line segments $uv$ with $u, v \in V$.
For an edge $e = uv$, $u$ is called the {\it head} of $e$, and $v$ is called the {\it tail} of $e$.
An {\it endvertex} of $e$ is a head or a tail of $e$.
For a vertex $v \in V$, the number of edges having $v$ as their heads, or their tails, is called {\it indegree} of $v$, or {\it outdegree} of $v$, respectively. 
Throughout the paper, the term "edge" is used to refer to a directed edge.

\begin{figure}[h]
\centering
\includegraphics[scale=0.3]{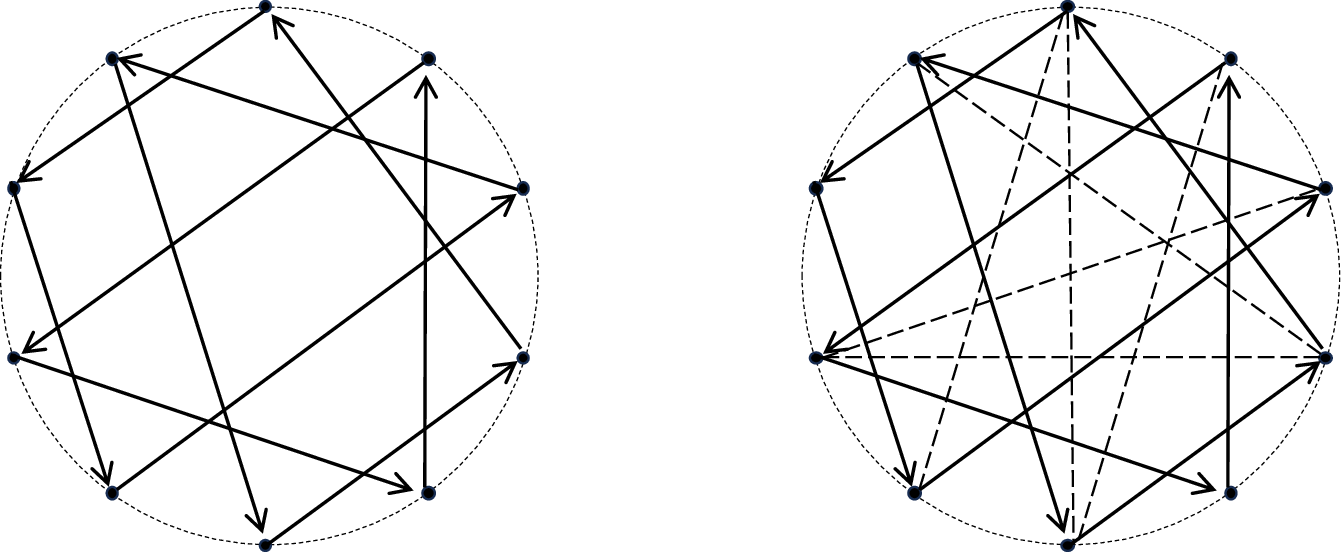}\\
\caption{A cyclic polygon(left) and one of its maximal star decompositions with a set of additional diagonals(right).
}\label{fig_pseudo_convex_polygon}
\end{figure}

Let $V$ be the set of vertices lying on a circumference of the plane.
Let $E$ be the set of edges such that both indgree and outdegree of $v$ are $1$ for all $v \in V$,
and if $uv, vw \in E$, the vertices $u, v, w$ are arranged in anti-clockwise order. 
We call $P = P(V, E)$ a {\it cyclic polygon}
(Fig. \ref{fig_pseudo_convex_polygon}).

For a vertex $v$ of a cyclic polygon $P$, there exists a unique pair of vertices $u$ and $w$ such that $uv, vw$ are edges of $P$.
We denote $u$ and $w$ by $v^{-}$ and $v^{+}$, respectively. 
For $u, v \in V$, we denote the set of vertices $x \in V$ such that $u, x, v$ are arranged in anti-clockwise order by $\llparenthesis uv \rrparenthesis$.
Similarly, we denote the sets $\llparenthesis uv \rrparenthesis \cup \{ u \}$, $\llparenthesis uv \rrparenthesis \cup \{ v \}$, $\llparenthesis uv \rrparenthesis \cup \{ u, v \}$, 
by $\llbracket uv \rrparenthesis$, $\llparenthesis uv \rrbracket$, and $\llbracket uv \rrbracket$, respectively.
For an edge $uv$, the {\it length} of $uv$ is defined as the number of vertices of $V$ contained in $\llparenthesis uv \rrbracket$. 

\begin{figure}[h]
\centering
\includegraphics[scale=0.3]{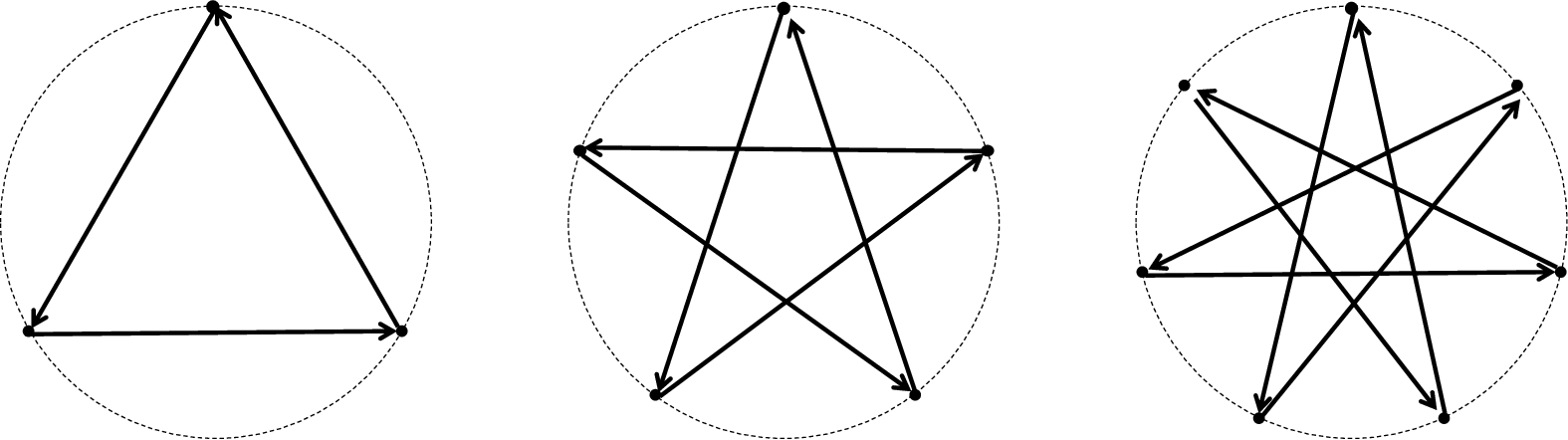}\\
\caption{$k$-stars for
$k=1$(left), $k=2$(center), $k=3$(right).}
\label{fig_star_polygon}
\end{figure}

For a positive integer $k$, a {\it $k$-star polygon}, or simply a {\it $k$-star}, is a cyclic polygon with $2k+1$ vertices such that length of all the  edges is $k$ (Fig. \ref{fig_star_polygon}).

A star is a $k$-star with some positive integer $k$.
We say that two edges $e$ and $f$ {\it share a common point} if $e$ and $f$ have a common endvertex or $e$ and $f$ intersect within the circle.
Remark that a cyclic polygon $P$ is a star if and only if for any two edges of $P$, they share a common point.

For a cyclic polygon $P(V, E)$,
a pair of vertices $u$ and $v$ is called {\it linkable}, if an undirected straight line segment $uv$ splits both $\angle u^{-} u u^{+}$ and $\angle v^{-} v v^{+}$ into a pair of smaller angles $\angle u^{-} u v$, $\angle v u u^{+}$ and $\angle v^{-} v u$, $\angle u v v^{+}$ (Fig. \ref{fig_linkable_pairs}).

\begin{figure}[h]
\centering
\includegraphics[scale=0.3]{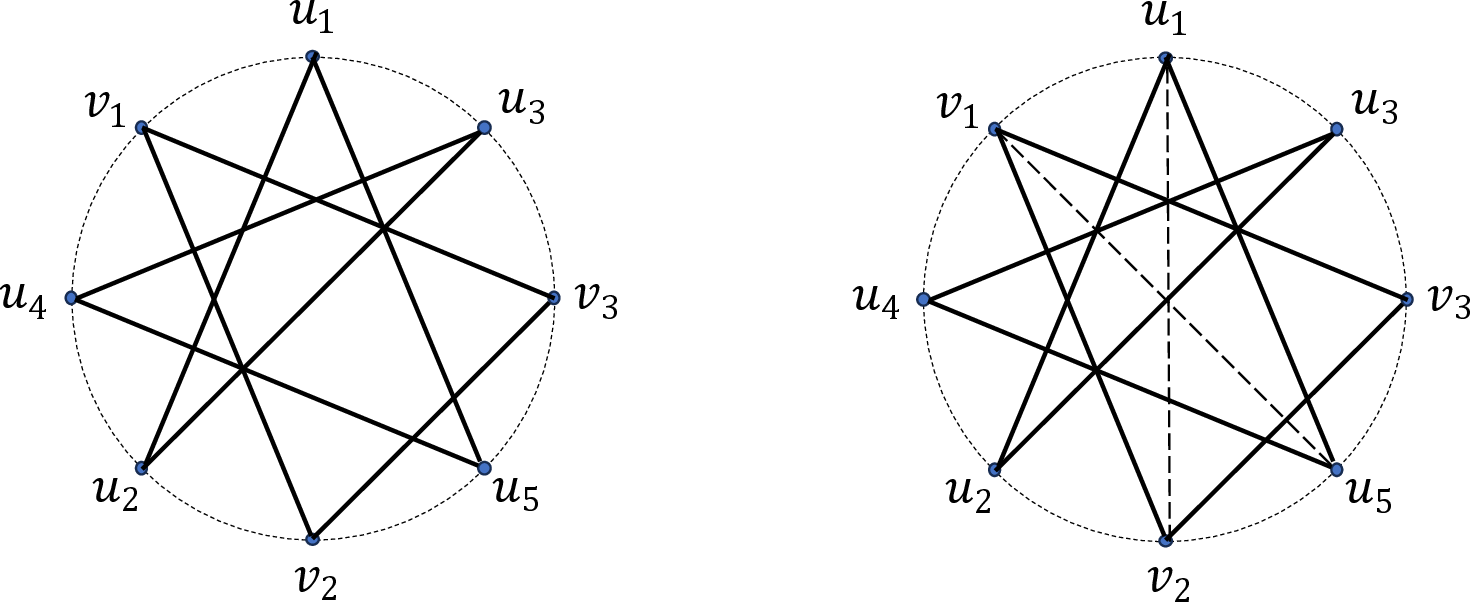}\\
\caption{Two stars $S_1 = u_1 u_2 u_3 u_4 u_5 u_1$ and $S_2 = v_1 v_2 v_3 v_1$ have three linkable pairs $(u_1, v_2)$, $(u_4, v_3)$ and $(u_5, v_1)$ (left).
By a star subdivision using $(u_1, v_2)$ and $(u_5, v_1)$, we have new stars $S'_1 = u_1 v_2 v_3 v_1 u_5 u_1$ and $S'_2 = v_2 u_1 u_2 u_3 u_4 u_5 v_1 v_2$(right).
}\label{fig_linkable_pairs}
\end{figure}

For a linkable pair $u$ and $v$, the undirected line segment $\overline{uv}$ is called a {\it diagonal}.
For a set of diagonals $D$, let us denote the set of all edges $uv$ and $vu$ for $\overline{uv} \in D$ by $\tilde{D}$.  
For a cyclic polygon $P(V, E)$ with a set of diagonals $D$,  a {\it subdivided  cyclic polygon} $P(V, E, D)$
is a geometric graph with the vertex set $V$ and the edge set $E \cup \tilde{D}$.

For a subdivided cyclic polygon $P(V, E, D)$, let us define a function $f_{\rm next}$ from $E \cup \tilde{D}$ to $E \cup \tilde{D}$;
 for $uv \in E \cup \tilde{D}$, $f_{\rm next}(uv)$ is defined as $vw$ such that there is no diagonal of $D$ splitting $\angle uvw$.
Since each diagonal corresponds to two edges of $\tilde{D}$ in opposite directions, $f_{\rm next}$ is well-defined and it turns out to be a bijection.

Since the head of $e$ is the tail of $f_{\rm next} (e)$ for any edge $e$, starting from a given edge $e$, we can traverse a sequence of edges as $e$, $f_{\rm next}(e)$, $f_{\rm next}^2(e)$, $\ldots$, $f_{\rm next}^k(e) = e$ with some $k$ to obtain an oriented cycle, which may have a repeated vertex.
Hence, $E \cup \tilde{D}$ can be decomposed into a set of cycles $E \cup \tilde{D} = C_1 \cup C_2 \cup \cdots \cup C_s$.
We call the decomposition a {\it cycle decomposition} of $P(V, E)$ with respect to $D$.

If all cycles $C_i$'s are stars in a cycle decomposition of $P(V, E)$ with respect to some set of diagonals $D$, the decomposition is called a {\it star decomposition} of $P(V, E)$.
In this situation, we say that $P(V, E)$ admits a star decomposition and we write the decomposition as $\mathcal{S}(V,E, D) = S_1 \cup S_2 \cup \cdots \cup S_s$, where $S_i$'s are stars in the decomposition.
A star decomposition of a cyclic polygon $P$ with respect to $D$ is called {\it maximal} if for any proper superset $D'$ of $D$, there exists no star decomposition of  $P$ with respect to $D'$.

For a convex polygon $P$, a {\it triangulation} of $P$ is a maximal $1$-star decomposition of $P$.
For positive integers $n$ and $k$ with $2k+1 \le n$,
 let $P_n^k$ denote a cyclic polygon with $n$ vertices such that length of all the edges is $k$.
Combinatorial properties of $P_n^k$ have been well investigated.
The following provides an overview of this topic.

\begin{figure}[h]
\centering
\includegraphics[scale=0.3]{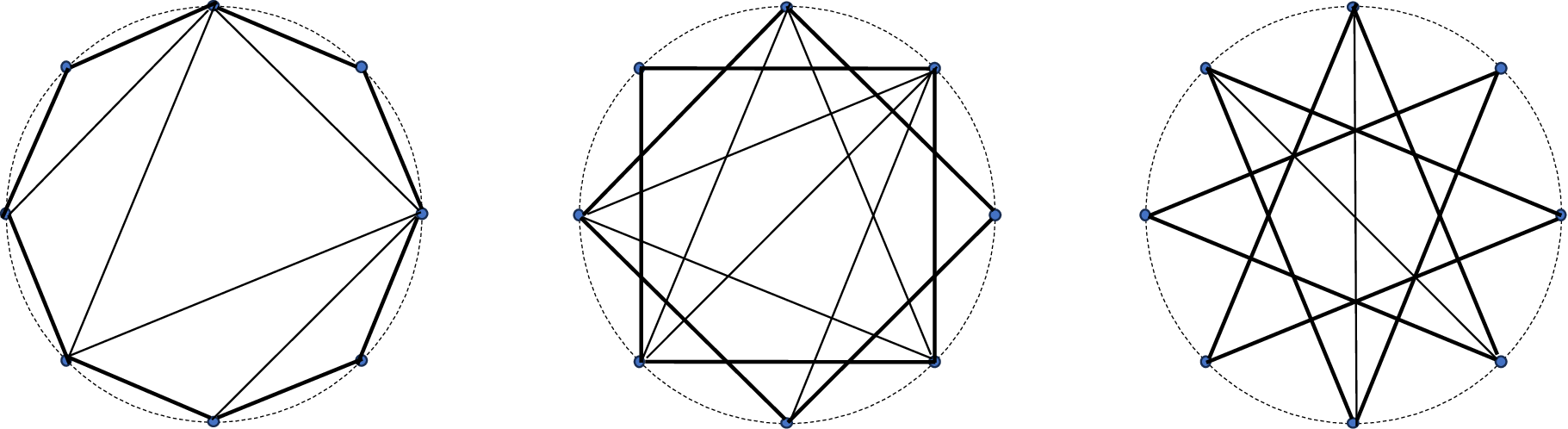}\\
\caption{$k$-triangulations for
$k=1$(left), $k=2$(center), $k=3$(right).
}\label{fig_k_triangulation}
\end{figure}

A pair of diagonals is called a {\it crossing} if they share no endpoint and they intersect within the circle.
A set of $k$ diagonals is called a $k$-{\it crossing} if any pair of them is a crossing.
A set of diagonals $D$ of a cyclic polygon is called $k$-{\it saturated} if $D$ has no $(k+1)$-crossing and $D \cup \{ e \}$ contains a $(k+1)$-crossing with any additional diagonal $e \not\in D$.
For $P_n^k$, if a set of diagonals $D$ is $k$-saturated then $|D| = k(n-2k-1)$ \cite{CP1992, DKM03, Nak00}.
For a positive integer $k$, a star decomposition $\mathcal{S}$ is called a $k$-{\it star decomposition}, or $k$-{\it triangulation}, if all the stars in $\mathcal{S}$ are $k$-stars (Fig. \ref{fig_k_triangulation}). 
Pilaud and Santos considered $P_n^k(V, E, D)$ as a complex of stars and proved that $D$ is $k$-saturated if and only if $P_n^k$ admits a maximal $k$-star decomposition with respect to $D$ \cite{PS2009}. 
Later in Proposition \ref{prop_k_triangulation}, we will remark that any maximal star decomposition of $P_n^k$ is a $k$-star decomposition of $P_n^k$.

A {\it diagonal flip} for a star decomposition $\mathcal{S}(V,E,D)$ is to replace a diagonal $e \in D$ with a new diagonal $f \not\in D$ to build another star decomposition $\mathcal{S'}(V,E,D')$, where $D' = (D \setminus \{ e \}) \cup \{ f \}$.
For a $k$-star decomposition of $P_n^k$ with respect to $D$, for any diagonal $e \in D$, there uniquely exists a diagonal $f \not\in D$ such that there is a $k$-star decomposition of $P_n^k$ with respect to $D' = (D \setminus \{ e \}) \cup \{ f \}$, and any two $k$-star decompositions of $P_n^k$ can be transformed into each other by a finite sequence of diagonal flips \cite{Nak00, PS2009}.  

Jonsson proved that the number of $k$-star decompositions of $P_n^k$ is \\ $\det (C_{n-i-j})_{1 \le i, j \le k}$ \cite{Jon05}.
As facets of a simplicial complex, $k$-star decompositions of $P_n^k$ and related subjects have been extensively studied in terms of pseudo arrangements on sorting networks \cite{PP2012, PS2012}, and subword complexes \cite{PS2015, SS2012, Stump2011}.    
In particular, Serrano and Stump showed an explicit bijection between a family of $k$-star decompositions of $P_n^k$ and a family of $k$-fans of Dick paths of length $2(n-2k)$ \cite{SS2012}.
Refer to Section $3$ of \cite{PSZ2023}, a rich survey of related topics.

The aim of the paper is to build upon a seminal perspective on $k$-triangulations presented by \cite{PS2009} and to study combinatorial properties of star decompositions for a cyclic polygon.
The main results of the paper are the following theorems.

\begin{thm}\label{thm_flip_uniqueness}
Let $\mathcal{S}$ be a maximal star decomposition of a cyclic polygon $P$ with respect to a set of diagonals $D$.
Then for any diagonal $e \in D$, there uniquely exists a diagonal $f \not\in D$ such that there is a maximal star decomposition $\mathcal{S'}$ of $P$ with respect to $D' = (D \setminus \{ e \}) \cup \{ f \}$. 
\end{thm}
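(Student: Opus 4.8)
The plan is to localize the effect of removing the diagonal $e$ and then reduce the whole statement to a counting problem on a single merged cycle. Write $e = \overline{uv}$ and let $S_i$ be the star containing the directed edge $uv$ and $S_j$ the one containing $vu$. Since every $k$-star visits $2k+1$ distinct vertices, no star can contain both $uv$ and $vu$, so $i \ne j$. Passing from $D$ to $D \setminus \{e\}$ changes $f_{\mathrm{next}}$ only at the edges entering $u$ and $v$ that previously continued along $e$; I would check from the definition that the cycle decomposition with respect to $D \setminus \{e\}$ is obtained from that of $D$ by replacing $S_i, S_j$ with a single cycle $C$, while every other star is untouched. Here $C$ is again a cyclic polygon, with vertex set $V(S_i) \cup V(S_j)$.

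Next I would argue that any admissible flip is local to $C$. If $D' = (D \setminus \{e\}) \cup \{f\}$ is a star decomposition, then the cycle decompositions for $D$ and $D'$ share every cycle not meeting $\{u,v\}$ or an endpoint of $f$; hence $f$ joins two vertices of $C$ and its two directed copies split $C$ into exactly two stars, all remaining stars being common to both decompositions. Conversely, any diagonal $f \ne e$ splitting $C$ into two stars lies inside the single cell $C$, so $f \notin D$, and it produces such a $D'$. This reduces Theorem \ref{thm_flip_uniqueness} to the following statement about $C$: \emph{$C$ admits exactly two decompositions into two stars, one of which is $\{S_i, S_j\}$}; the desired $f$ is then the diagonal of the second.

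Before counting I would settle the word ``maximal''. For any star decomposition of a cyclic polygon into stars $S_1,\dots,S_s$, with $S_t$ an $m_t$-star, counting directed edges gives $2\sum_t m_t + s = n + 2|D|$, while a rotation-number count gives $\sum_t m_t = r + |D|$; eliminating $\sum_t m_t$ yields $s = n - 2r$, independently of the decomposition. For $C$ we have $n_C = 2(k_1+k_2)$, and applying the rotation-number count to its decomposition $\{S_i,S_j\}$ gives $r_C = k_1+k_2-1$, so every decomposition of $C$ has $s = n_C - 2r_C = 2$ stars. Thus $C$ admits no decomposition into three or more stars: every two-star decomposition of $C$ is already unrefinable, and since the stars of $\mathcal{S}$ outside $C$ are unrefinable as well ($\mathcal{S}$ being maximal), the decomposition $\mathcal{S}'$ produced by any such $f$ is maximal in $P$. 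Hence ``maximal'' imposes no extra constraint on $f$, and it suffices to prove the italicized counting statement.

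For the counting I would reformulate on the cyclic sequence of the $2(k_1+k_2)$ edges of $C$. Splitting $C$ by a diagonal $f$ cuts this sequence into two contiguous arcs of even lengths $2m_1$ and $2m_2$; the arc $w_0 \to w_1 \to \cdots \to w_{2m_1}$ together with the chord $\overline{w_0 w_{2m_1}}$ must be an $m_1$-star, i.e.\ (by the characterization that a cyclic polygon is a star iff every two of its edges share a common point) all of its edges must have equal length with respect to \emph{its own} vertex set, and likewise for the complementary arc. The cut recovering $\{S_i, S_j\}$ is one solution, and the task is to show there is exactly one other. The main obstacle lies precisely here: the star condition on an arc is measured in the arc's own vertex set, which varies as the cut slides, so the condition is not local. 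I would attack it by isolating an extremal feature of $C$ — concretely its unique shortest edge (of length one in the smallest cases) — and showing that in any two-star split one of the two stars must contain this edge, its second endpoint being fixed up to the two ways of adjoining a neighbouring edge; a monotonicity/exchange argument as the cut slides then forces the equal-length condition to hold at exactly two positions, giving the original decomposition and a unique flip $f$. Making this final sliding argument rigorous, rather than the reductions preceding it, is where the real work lies.
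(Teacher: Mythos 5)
Your opening reduction is sound and in fact coincides with the paper's first step: the two stars containing the directed copies of $e$ form the unique double star, deleting $e$ merges them into a single even cycle $C$, and any admissible flip diagonal must split $C$ into two stars. But the statement you reduce everything to is false, and the error lies in how you dispose of maximality. You argue that since every star decomposition of $C$ has $n_C - 2r_C = 2$ stars, ``every two-star decomposition of $C$ is already unrefinable,'' so that maximality imposes no constraint on $f$. This conflates the number of stars with the number of diagonals: refining a decomposition means enlarging the diagonal set, and adding diagonals does not increase the star count. The paper's Lemma \ref{la_star_subdivision} (star subdivision) does exactly this — it turns two stars with two linkable pairs into two new stars using two \emph{additional} diagonals. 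Consequently a two-star decomposition of $C$ can be non-maximal, and $C$ can admit more than two single-diagonal splits into two stars. Concretely: place vertices at circular positions $q_1=0$, $r_2=1$, $q_2=1.5$, $r_3=2$, $q_3=3$, $q_4=5$, $r_1=6$, $q_5=7$ (anticlockwise, circumference $10$), let $T_1$ be the $2$-star on $\{q_1,q_2,q_3,q_4,q_5\}$ and $T_2$ the $2$-star on $\{q_1,r_2,r_3,q_3,r_1\}$. These share the diagonal $\overline{q_1q_3}$ and form an independent double star, and a direct check of the angle arcs shows they have three linkable pairs $(q_2,r_1)$, $(q_4,r_2)$, $(q_5,r_3)$. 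By Lemma \ref{la_diagonal_flip} each of these yields a valid flip of $\overline{q_1q_3}$, so the merged $8$-cycle $C$ has four single-diagonal two-star decompositions; taking $P=C$ with the (non-maximal) decomposition $\{T_1,T_2\}$, the diagonal $e=\overline{q_1q_3}$ has three admissible flips, not one. Uniqueness in Theorem \ref{thm_flip_uniqueness} therefore genuinely requires the maximality of $\mathcal{S}$: Lemma \ref{la_linkable_pairs_of_two_stars} gives an odd number of linkable pairs for the double star, and Lemma \ref{la_star_subdivision} shows that three or more would let one enlarge $D$, contradicting maximality. That is precisely the step your outline discards.

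Independently of this, the part you yourself flag as ``where the real work lies'' — counting the two-star splits of $C$ — is the entire mathematical content of the theorem, and your proposal leaves it as an unproven sketch: $C$ need not have a unique shortest edge, no monotonicity statement is formulated, and (as the example above shows) no sliding argument can succeed without importing the maximality hypothesis. The paper's proof supplies exactly the three missing ingredients: the parity/weight-function argument of Lemma \ref{la_linkable_pairs_of_two_stars} (oddness of the number of linkable pairs), maximality combined with Lemma \ref{la_star_subdivision} to pin that number to one (uniqueness), and the explicit construction of Lemma \ref{la_diagonal_flip} showing the unique linkable pair really does produce a star decomposition (existence). None of these appears in usable form in your proposal.
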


The proof of Theorem \ref{thm_flip_uniqueness} will be shown in Section $2$.

\begin{thm}\label{thm_star_decomposition}
Let $\mathcal{S}_1$ and $\mathcal{S}_2$ be a pair of maximal star decompositions of a common cyclic polygon.
Then $\mathcal{S}_1$ can be transformed into $\mathcal{S}_2$ by a finite sequence of diagonal flips. 
\end{thm}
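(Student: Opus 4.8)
The plan is to fix $\mathcal{S}_2 = \mathcal{S}(V,E,D_2)$ as a target and to flip $\mathcal{S}_1 = \mathcal{S}(V,E,D_1)$ toward it, using the integer $\Phi(\mathcal{S}_1) = |D_1 \setminus D_2|$ as a monovariant and inducting on its value. First I would dispose of the base case by maximality alone: if $\Phi = 0$ then $D_1 \subseteq D_2$, and since $\mathcal{S}_2$ is a star decomposition with respect to $D_2$ while $\mathcal{S}_1$ is maximal, no proper superset of $D_1$ can support a star decomposition, forcing $D_1 = D_2$ and hence $\mathcal{S}_1 = \mathcal{S}_2$. Note that this uses only maximality and does not presuppose that all maximal decompositions have equal size; that equality will instead follow \emph{a posteriori}, since a flip preserves the number of diagonals.

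So the whole argument reduces to a \emph{Progress Lemma}: if $D_1 \setminus D_2 \neq \emptyset$, then there exists a diagonal $e \in D_1 \setminus D_2$ whose unique flip partner $f$, supplied by Theorem \ref{thm_flip_uniqueness}, lies in $D_2$. Granting this, I would perform the flip to obtain, again by Theorem \ref{thm_flip_uniqueness}, a maximal star decomposition $\mathcal{S}_1' = \mathcal{S}(V,E,D_1')$ with $D_1' = (D_1 \setminus \{e\}) \cup \{f\}$. Since $f \notin D_1$ while $f \in D_2$, one computes $|D_1' \setminus D_2| = |D_1 \setminus D_2| - 1 = \Phi - 1$. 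By the induction hypothesis $\mathcal{S}_1'$ is joined to $\mathcal{S}_2$ by a finite flip sequence, and prepending the single flip $\mathcal{S}_1 \to \mathcal{S}_1'$ completes the transformation.

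To establish the Progress Lemma I would work in the overlay of $\mathcal{S}_1$ and $\mathcal{S}_2$ and select $e = \overline{uv} \in D_1 \setminus D_2$ that is \emph{extremal} with respect to $\mathcal{S}_2$ — concretely, one for which the region $\llbracket uv \rrbracket$ on one side interacts minimally with $D_2$ (for instance, minimizing the length of $e$ among diagonals of $D_1 \setminus D_2$, or minimizing the number of diagonals of $D_2$ crossing $e$). The two stars $S$ and $S'$ of $\mathcal{S}_1$ incident to $e$ bound a merged region whose combinatorial structure is governed by $f_{\rm next}$, and the flip of $e$ is exactly the replacement of $e$ by the unique alternative diagonal $f$ that re-splits this region into stars. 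The extremality of $e$ would then be used to show that this forced $f$ cannot cross, and must in fact coincide with, a diagonal already present in $D_2$, so that $f \in D_2$. This local matching of the two star structures across $e$, using the angle and linkability conditions together with the uniqueness from Theorem \ref{thm_flip_uniqueness}, is the delicate core of the proof.

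The hard part will be precisely this Progress Lemma: ensuring that some extremal $e \in D_1 \setminus D_2$ flips \emph{into} $D_2$, rather than into a diagonal lying in neither set, which would leave $\Phi$ unchanged and jeopardize termination. The risk is that a carelessly chosen $e$ flips to an intermediate diagonal; the remedy must be a choice of $e$ that is genuinely innermost relative to the cells cut out by $D_2$, so that the star of $\mathcal{S}_2$ occupying that cell pins down the partner $f$. Additional care is needed because the stars in a maximal decomposition of a general cyclic polygon need not all be $k$-stars, making the local picture less rigid than in the $P_n^k$ setting of \cite{PS2009}; nevertheless, the well-definedness and uniqueness of the flip guaranteed by Theorem \ref{thm_flip_uniqueness} should force the extremal diagonal to behave as required.
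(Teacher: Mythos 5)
There is a genuine gap, and unfortunately it is fatal rather than merely technical: the \emph{Progress Lemma} on which your whole induction rests is false, already for triangulations of a convex polygon (which are exactly the maximal $1$-star decompositions, with your flips being the usual quadrilateral flips). Take a convex hexagon with vertices $v_1,\dots,v_6$ in anti-clockwise order, and let
$D_1 = \{\overline{v_1v_3},\, \overline{v_3v_5},\, \overline{v_1v_5}\}$ (the central triangle $v_1v_3v_5$ with three ears) and
$D_2 = \{\overline{v_2v_4},\, \overline{v_4v_6},\, \overline{v_2v_6}\}$ (the central triangle $v_2v_4v_6$). Then $D_1\setminus D_2 = D_1$, but the unique flip partners of the three diagonals of $D_1$ are, respectively, $\overline{v_2v_5}$, $\overline{v_1v_4}$ and $\overline{v_3v_6}$, none of which lies in $D_2$. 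So no choice of $e \in D_1\setminus D_2$ --- extremal or otherwise --- decreases $\Phi$, and your induction cannot start. This is not an artifact of a bad selection rule: the flip distance between these two triangulations is $4$, which strictly exceeds $|D_1\setminus D_2| = 3$, whereas the Progress Lemma, if true, would force flip distance $= |D_1\setminus D_2|$ for every pair. Hence no monovariant argument based on $|D_1\setminus D_2|$ can work; any correct proof must sometimes perform flips that introduce diagonals lying in neither decomposition. (Your base case, by contrast, is fine: $D_1\subseteq D_2$ plus maximality of $\mathcal{S}_1$ does force $D_1=D_2$.)

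The paper avoids this trap by inducting on the number of vertices rather than on disagreement with the target. Its key technical result (Proposition \ref{prop_yield_border_star}, proved via Lemma \ref{la_find_linkable_pair} and a lexicographic minimization of an angle sequence) shows that \emph{any} maximal star decomposition can be flipped into one containing a canonical ``border star'' $S^{\ast}(P,v)$ attached to a minimal edge of $P$. Applying this to both $\mathcal{S}_1$ and $\mathcal{S}_2$ routes them through decompositions sharing the star $S^{\ast}(P,v)$; deleting that star leaves maximal decompositions of a strictly smaller cyclic polygon, and the induction closes. The essential difference is that the paper's flip sequences are allowed to be long and to pass through ``intermediate'' diagonals, which the hexagon example shows is unavoidable.
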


The proof of Theorem \ref{thm_star_decomposition} will be shown in Section $3$.

For a cyclic polygon $P(V, E)$, and for $v \in V$ the {\it exterior angle} of $v$ is $\pi - \angle v^{-} v v^{+}$.
Then the sum of exterior angles over all vertices of $V$ turns out to be a multiple of $2 \pi$.
The {\it rotation number} of $P$, denoted by $r(P)$, is defined as a positive integer $r$ such that the sum of the exterior angles equals $2\pi r$.       

\begin{la}\label{la_num_stars}
Let $\mathcal{S}$ be a star decomposition of a cyclic polygon $P$.
Then $\mathcal{S}$ contains $n - 2r$ stars, where $n$ is the number of vertices of $P$ and $r$ is the rotation number of $P$. 
\end{la}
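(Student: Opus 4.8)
The plan is to establish $s = n - 2r$ (writing $s$ for the number of stars) by a double count of edges combined with an accounting of angles, deliberately avoiding any Euler/planarity argument, since the stars may cross one another inside the circle. Throughout, write $\mathcal{S} = S_1 \cup \cdots \cup S_s$ where $S_i$ is a $k_i$-star, and record two preliminary facts: a $k$-star is a single closed cycle on its $2k+1$ vertices and hence has exactly $2k+1$ edges; and, directly from the definition of the rotation number, its exterior angles sum to $2\pi k$, so its rotation number equals $k$.

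First I would count $|E \cup \tilde{D}|$ in two ways. Since every vertex has outdegree $1$ in $E$, we have $|E| = n$, and each diagonal of $D$ contributes two oppositely directed edges to $\tilde{D}$, so $|E \cup \tilde{D}| = n + 2|D|$. On the other hand the decomposition partitions $E \cup \tilde{D}$ into the stars $S_i$, giving $|E \cup \tilde{D}| = \sum_i (2k_i + 1) = 2\sum_i k_i + s$. Equating the two yields $n + 2|D| = 2\sum_i k_i + s$.

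Next comes the angle accounting. For each star $S_i$, summing its exterior angles gives $\sum_{v \in S_i}\bigl(\pi - \beta_i(v)\bigr) = 2\pi k_i$, where $\beta_i(v)$ is the interior angle of the corner that $S_i$ makes at $v$. Summing over all $i$, and using that the total number of (star, corner) incidences equals the number of edges $n + 2|D|$, I obtain $\pi(n+2|D|) - \Sigma = 2\pi\sum_i k_i$, where $\Sigma = \sum_i\sum_{v \in S_i}\beta_i(v)$ is the grand total of interior corner angles. The key is to evaluate $\Sigma$ by grouping corners at each fixed vertex $v$: letting $d_v$ be the number of diagonals incident to $v$, these diagonals all lie inside the wedge $\angle v^- v v^+$ by linkability and cut it into $d_v + 1$ angular sectors; since $f_{\rm next}$ pairs each incoming edge at $v$ with the next outgoing edge across which no diagonal lies, the $1+d_v$ star-corners at $v$ are exactly these sectors, tiling the wedge with no gap or overlap. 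Hence $\sum_{i : v \in S_i}\beta_i(v) = \angle v^- v v^+$ and $\Sigma = \sum_{v\in V}\angle v^- v v^+$.

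Finally I would combine. By the definition of the rotation number of $P$, $2\pi r = \sum_{v\in V}\bigl(\pi - \angle v^- v v^+\bigr) = n\pi - \Sigma$, so $\Sigma = n\pi - 2\pi r$. Substituting into the angle identity gives $\sum_i k_i = |D| + r$, and feeding this into the edge identity gives $n + 2|D| = 2(|D| + r) + s$, i.e. $s = n - 2r$. I expect the main obstacle to be the justification that the interior corner angles at each vertex partition $\angle v^- v v^+$ exactly; this rests squarely on the definition of $f_{\rm next}$ and the cycle decomposition (every directed edge at $v$ lies in exactly one star, and the matching of incoming to outgoing edges sweeps the wedge once), whereas everything else is bookkeeping, the fact that a $k$-star has rotation number $k$ being immediate from the angle-sum definition.
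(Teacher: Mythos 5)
Your proof is correct and is essentially the paper's own argument: both rest on angle accounting, namely that each $k$-star has interior angle sum $\pi$ (equivalently, rotation number $k$) while the interior angles of $P$ sum to $\pi(n-2r)$, with the star corners at each vertex tiling the wedge $\angle v^- v v^+$. The differences are only in presentation: the paper leaves the wedge-tiling implicit and skips your edge-count detour, reading off $s = n - 2r$ directly from $s\pi = \pi(n-2r)$, whereas you justify the tiling explicitly via $f_{\rm next}$ --- a worthwhile addition, but the same approach.
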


\begin{proof}
Since the sum of exterior angles of $P$ is $2\pi w$, the sum of internal angles of $P$ is $\pi n - 2\pi w = \pi (n-2r)$.
On the other hand, each star polygon has $\pi$ as the sum of its internal angles.
Hence, we have the number of stars is $n-2r$. 
\end{proof}

For example, for a cyclic polygon $P$ in Fig.\ref{fig_pseudo_convex_polygon}, we have $r(P) = 3$.
Hence, by Lemma \ref{la_num_stars}, there exist $n - 2r = 10 - 2\times 3=4$ stars in any star decomposition of $P$. 

\begin{thm}\label{thm_num_diagonals}
Let $P$ be a cyclic polygon which admits a star decomposition.
Let $n, p, r$ be the number of vertices of $P$, the number of linkable pairs of $P$ and the rotation number of $P$.
Then the number of diagonals contained in a maximal star decomposition of $P$ is $p - (n-2r)(n-2r-1)/2$.
\end{thm}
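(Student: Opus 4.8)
The plan is to count the linkable pairs of $P$ in two groups, relative to a fixed maximal star decomposition $\mathcal{S}(V,E,D)$: those whose segment is a diagonal of $D$, and those whose segment is not. The first group has exactly $|D|$ elements, so it suffices to prove that the second group --- the linkable pairs that are ``blocked'' in $\mathcal{S}$ --- has exactly $\binom{n-2r}{2}$ elements. By Lemma \ref{la_num_stars} the decomposition has $s=n-2r$ stars, so $\binom{n-2r}{2}=\binom{s}{2}$ is the number of unordered pairs of distinct stars of $\mathcal{S}$. Since $p$ and $s$ depend only on $P$, once the count of blocked pairs is established the identity $|D|=p-\binom{s}{2}$ follows for every maximal star decomposition; in particular this reproves, via Theorem \ref{thm_star_decomposition}, that all maximal star decompositions of $P$ share the same number of diagonals.

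The core of the argument is a bijection between blocked linkable pairs and unordered pairs of distinct stars. Given a linkable pair $(u,v)$ with $\overline{uv}\notin D$, the diagonals of $D$ emanating from $v$ subdivide the interior angle $\angle v^{-}vv^{+}$ into wedges, each wedge being the angular region of a single star of $\mathcal{S}$ at $v$. Because $(u,v)$ is linkable the ray from $v$ toward $u$ lies strictly inside $\angle v^{-}vv^{+}$, and because $\overline{uv}\notin D$ this ray is interior to exactly one wedge; I let $T_v$ be the corresponding star. Symmetrically I define $T_u$. The map I would study sends $(u,v)$ to the pair $\{T_u,T_v\}$.

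Two things must then be proved. First, $T_u\neq T_v$. Here I would use the fact that a single $k$-star has no internal linkable pair: the interior angle at each of its vertices is the ``sharp'' wedge at a star point, and any chord to a non-adjacent vertex of the same star falls outside it (this is immediate from the cyclic geometry and can be checked directly on a $k$-star). Consequently, if $T_u=T_v=S$ then $u,v$ would be vertices of $S$ with $\overline{uv}$ interior to $S$ at both ends, i.e.\ an internal linkable pair of $S$, which is impossible; hence $\{T_u,T_v\}$ is genuinely a pair of distinct stars. Second, and this is the hard part, I must show the map is a bijection onto all of $\binom{s}{2}$, that is, that between every two stars of $\mathcal{S}$ there is exactly one blocked linkable pair realizing them as $\{T_u,T_v\}$.

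I expect the main obstacle to be exactly this bijectivity. For injectivity one shows that a pair of stars $\{S_a,S_b\}$ cannot be the image of two distinct blocked pairs, and for surjectivity that it is the image of at least one; both should follow from the angular ordering of edges and diagonals around each vertex and from the map $f_{\rm next}$ governing the stars, together with the flip-uniqueness of Theorem \ref{thm_flip_uniqueness}, which pins down how a would-be diagonal interacts with the two stars it separates. As a consistency check I would also record the elementary edge-count identity obtained by summing $2k_i+1$ over the $s$ stars against $|E\cup\tilde{D}|=n+2|D|$, namely $\sum_i k_i=|D|+r$; it is not needed for the theorem but confirms the bookkeeping. Granting the bijection, the blocked pairs number $\binom{s}{2}=\binom{n-2r}{2}$, and therefore $|D|=p-(n-2r)(n-2r-1)/2$, as claimed.
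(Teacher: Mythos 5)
Your counting framework is sound and is in fact the same as the paper's: relative to a fixed maximal decomposition $\mathcal{S}(V,E,D)$ with $s=n-2r$ stars (Lemma \ref{la_num_stars}), every linkable pair of $P$ is either a diagonal of $D$ or a ``blocked'' pair, the blocked pairs are exactly the linkable pairs between two \emph{distinct} stars of $\mathcal{S}$ (your observation that $T_u\ne T_v$, via the fact that a star has no internal linkable pair, is correct and matches what the paper uses elsewhere), and the theorem reduces to showing that each of the $\binom{s}{2}$ pairs of stars carries exactly one such pair. But that last statement --- which you yourself flag as ``the hard part'' and defer --- is precisely the mathematical content of the theorem, and you do not prove it. It splits into two claims: (i) \emph{existence}: any two stars of the decomposition have at least one linkable pair between them; (ii) \emph{uniqueness}: under maximality, at most one.

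Neither claim follows from the tools you gesture at. Theorem \ref{thm_flip_uniqueness} concerns a double star, i.e.\ two stars sharing a diagonal of $D$, whereas (i) must hold for arbitrary pairs of stars in $\mathcal{S}$, which in general share no diagonal; flip-uniqueness says nothing about such pairs. In the paper, (i) is Lemma \ref{la_linkable_pairs_of_two_stars}: any two mutually independent stars have an odd number (hence at least one) of linkable pairs, proved by a nontrivial weight/parity argument around the circle --- a standalone fact about stars, independent of maximality, $f_{\rm next}$, or flips, and by far the longest proof in this part of the paper. Claim (ii) is where maximality enters, but not through flips: one needs Lemma \ref{la_star_subdivision} (star subdivision), which shows that two linkable pairs between two stars allow \emph{both} corresponding diagonals to be added simultaneously so as to produce a star decomposition with respect to a proper superset of $D$, contradicting maximality. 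The two-diagonal construction is essential here: adding just one of the diagonals merges the two stars into a single cycle of even length visiting $u$ and $v$ twice, which is never a star, so maximality alone (``no single diagonal can be added'') cannot be invoked directly. Both lemmas appear in the paper before this theorem and could simply have been cited; without them, or substitutes for their proofs, your argument has a genuine gap exactly at its core.
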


For example, for a cyclic polygon $P$ in Fig.\ref{fig_pseudo_convex_polygon}, we have $p=12$ linkable pairs.
Hence, by Theorem \ref{thm_num_diagonals}, there exist $p - (n-2r)(n-2r-1)/2 = 12 - (10-2\times 3)(10-2\times 3-1)/2=6$ diagonals in any maximal star decomposition of $P$. 

The proof of Theorem \ref{thm_num_diagonals} will be shown in Section $2$.

The rest of the paper is organized as follows:
In Section $2$, we show basic properties of star polygons, which play key roles to prove the main results.
In Section $3$, we will prove the main theorem, Theorem \ref{thm_star_decomposition}.
In Section $4$, we discuss some open problems.

\section{Basic Properties of Star Polygons}

Two angles $\angle x_i y_i z_i$ for $i=1,2$ are called {\it overlapped} if $y_1=y_2$ and $\llparenthesis z_1 x_1 \rrbracket \cap \llparenthesis z_2 x_2 \rrbracket \ne \emptyset$.
A family of cyclic polygons $\mathcal{P}$ is called {\it independent} if no two angles of distinct polygons of $\mathcal{P}$ are mutually overlapped.
Note that if a cyclic decomposition of a subdivided cyclic polygon yields a star decomposition $\mathcal{S}$, we have $\mathcal{S}$ is independent.

We notice an observation useful for simplifying the proofs of the results described later.

\begin{obs}\label{obs_linkable_observation}
Let $S_1(V_1, E_1)$ and $S_2(V_2, E_2)$ be mutually independent stars. 
If $V_1 \cap V_2 \ne \emptyset$, then we have mutually independent stars $S'_1(V'_1, E'_1)$ and $S'_2(V'_2, E'_2)$ satisfying following conditions.
\begin{enumerate}
\item $V'_1 \cap V'_2 = \emptyset$.
\item $S'_i$ and $S_i$ is isomorphic as directed graphs for $i=1,2$.
\item A pair of edges of $S'_1$ and $S'_2$ is a crossing if and only if its corresponding pair of edges of $S_1$ and $S_2$ is a crossing. 
\item A pair of vertices of $S'_1$ and $S'_2$ is linkable if and only if its corresponding pair of vertices of $S_1$ and $S_2$ is linkable.
\end{enumerate}
\end{obs}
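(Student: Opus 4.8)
The plan is to prove the observation by a \emph{vertex-splitting} argument: I keep the cyclic order of $V_1 \cup V_2$ on the circle but resolve every coincidence, replacing each shared vertex by two points lying within a tiny arc of length $\delta$ around it, one carrying the two edges of $S_1$ and the other the two edges of $S_2$. The whole point is that both relations I must control are governed purely by the cyclic order of the endpoints on the circle: two chords form a crossing exactly when their four endpoints alternate around the circle, and, as one checks from the definition, a pair $(u,w)$ is linkable exactly when $w$ lies in the interior of the far arc $\llparenthesis u^+ u^- \rrbracket$ subtended by the interior angle at $u$ and, symmetrically, $u$ lies in the interior of the far arc at $w$. Hence it suffices to produce a strict cyclic refinement of the order of $V_1 \cup V_2$ in which (a) the order of every pair of points that were already distinct is unchanged, and (b) the two copies of each shared vertex are placed in the correct order.

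First I would record the local consequence of independence. If $v \in V_1 \cap V_2$, then the interior angle of $S_1$ at $v$ subtends the arc $A_1 = \llparenthesis b_1 a_1 \rrbracket$, where $a_1, b_1$ are the $S_1$-neighbours $v^-, v^+$ of $v$, and likewise $S_2$ subtends $A_2 = \llparenthesis b_2 a_2 \rrbracket$; neither arc contains $v$, and independence says exactly that $A_1 \cap A_2 = \emptyset$. Cutting the circle at $v$ turns the complement of $v$ into an interval containing the two disjoint arcs $A_1, A_2$, so one of them, say $A_i$, is met first when we travel anticlockwise from $v$. I place the copy of $v$ carrying $S_i$'s edges on the anticlockwise side and the other copy on the clockwise side, so that the two copies are consecutive on the circle.

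The heart of the argument is the claim that, with this choice, no edge of $S_1$ at $v$ crosses an edge of $S_2$ at $v$; this is what property (3) demands, since such a pair shared the endpoint $v$ originally and was therefore not a crossing. With the two copies consecutive, two such chords cross precisely when the endpoint lying on the clockwise copy precedes, along the long arc joining the copies, the endpoint lying on the anticlockwise copy; but the construction puts on the anticlockwise side the star whose far arc comes first, so every endpoint of the anticlockwise copy precedes every endpoint of the clockwise copy and no such crossing occurs. This local computation, reducing the non-overlap of the two angles to the linear separation of the two far arcs via the alternation criterion, is the step I expect to be the main obstacle, and I would phrase it entirely in terms of cyclic order to keep it clean.

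It then remains to collect the routine verifications. Any pair of edges that is \emph{not} incident to a common split vertex involves four points that were pairwise distinct before splitting, so for $\delta$ small enough their cyclic order --- and hence their crossing status and, by the arc description above, the linkability of the relevant vertex pairs --- is unchanged; the only pairs whose status can move are those meeting at a common split vertex, already handled. Because each copy is displaced by less than $\delta$ and never crosses a vertex of its own star, the cyclic order within each $V_i'$ equals that within $V_i$, so edge lengths are preserved and $S_i'$ is again a star isomorphic to $S_i$ as a directed graph, giving (2). Performing the splitting at every shared vertex (with one common small $\delta$, whose required directions are read off the original independent configuration, or one vertex at a time by induction on $|V_1 \cap V_2|$) yields $V_1' \cap V_2' = \emptyset$, which is (1) and makes $S_1', S_2'$ trivially independent, while (3) and (4) follow from the preceding paragraphs. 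The only caveat to watch is the degenerate case where a tested vertex lies on the boundary of a far arc, i.e. coincides with a star-neighbour; but then the corresponding segment is an edge rather than a diagonal and is not a linkable pair, so it may be excluded.
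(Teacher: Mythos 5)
Your proposal is correct and takes essentially the same approach as the paper: the paper's proof also resolves each shared vertex $v$ by splitting it into two consecutive copies placed on either side of $v$, with the choice of which star's copy goes on which side dictated by the non-overlap of the two angles at $v$ (independence), exactly as in your construction. The paper merely states this splitting and leaves the verification of conditions (1)--(4) implicit, so your cyclic-order checks of the crossing and linkability criteria (including the degenerate shared-endpoint case) simply make explicit what the paper takes for granted.
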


\begin{proof}
Let $v$ be a vertex of $V_1 \cap V_2$ and $u_i v$, $v w_i$ are edges of $S_i$ for $i=1,2$.
Since $S_1$ and $S_2$ are independent, $\angle u_1 v w_1$ and $\angle u_2 v w_2$ are not overlapped.
We may assume $v \in \llparenthesis u_1 u_2 \rrparenthesis$.
Let us split $v$ to two new vertices $v_1$ and $v_2$ such that $u_1, v_1, v, v_2, u_2$ are arranged in anti-clockwise order and $\llparenthesis v_1 v_2 \rrparenthesis$ contains no vertex other than $v$.
Then let us define $V'_i$ as $(V_i \setminus \{ v \}) \cup \{ v_i \}$.
In the same manner, we can split all the vertices contained in $V_1 \cap V_2$, and build a pair of stars $S'_1$ and $S'_2$.
\end{proof}

\begin{la}\label{la_linkable_pairs_of_two_stars}
Let $S_1(V_1, E_1)$ and $S_2(V_2, E_2)$ be mutually independent stars. 
Then there exist odd number of linkable pairs of vertices $(v_1, v_2)$ with $v_i \in V_i$ for $i = 1, 2$.
Furthermore, for distinct linkable pairs, their corresponding diagonals are crossing with each other. 
\end{la}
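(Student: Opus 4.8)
The plan is to first invoke Observation \ref{obs_linkable_observation} to reduce to the case $V_1 \cap V_2 = \emptyset$: that observation preserves both linkability (condition (4)) and crossings (condition (3)), so it suffices to treat disjoint vertex sets, where independence is automatic since no two angles can share a vertex. The backbone of the argument is an \emph{antipodal gap} description of splitting. Writing the vertices of a $k$-star anticlockwise as $x_0,\dots,x_{2k}$, the edge leaving $x_i$ is $x_i x_{i+k}$ and the edge entering it is $x_{i-k}x_i$ (indices mod $2k+1$), so $x_i^+=x_{i+k}$ and $x_i^-=x_{i+k+1}$ are \emph{consecutive} vertices of the star. A non-vertex point $z$ of the circle splits $\angle x_i^- x_i x_i^+$ exactly when $z\in\llparenthesis x_i^+ x_i^-\rrparenthesis$, i.e. when $z$ lies in the single gap between $x_{i+k}$ and $x_{i+k+1}$. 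Since $i\mapsto i+k$ is a bijection modulo $2k+1$, these splitting regions are precisely the $2k+1$ gaps of the star, each being the region of a unique vertex. I would therefore define maps $\phi_1$ and $\phi_2$, each sending a point that is not a vertex of the respective star to the unique vertex whose angle it splits; each is constant on gaps, weakly monotone for the anticlockwise order, and winds once around its vertex set as $z$ traverses the circle once.

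With this set up, a pair $(u,v)$ with $u\in V_1$, $v\in V_2$ is linkable iff $v$ splits the $S_1$-angle at $u$ and $u$ splits the $S_2$-angle at $v$, that is iff $\phi_1(v)=u$ and $\phi_2(u)=v$. Hence linkable pairs correspond bijectively to the fixed points of $\Phi:=\phi_1\circ\phi_2:V_1\to V_1$, the pair attached to a fixed point $u$ being $(u,\phi_2(u))$; and since $\phi_2$ is a function, each vertex lies in at most one linkable pair, so the linkable pairs form a partial matching between $V_1$ and $V_2$. The two assertions now read: (i) $\Phi$ has an odd number of fixed points, and (ii) the matching chords pairwise cross.

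For (ii) I would argue from the winding-one monotonicity of $\phi_2$ together with the antipodal location of the matched points. Take linkable pairs $(u_1,v_1)\neq(u_2,v_2)$, so $u_1\neq u_2$ and $v_1\neq v_2$. If the chords did not cross, $u_2$ and $v_2$ would lie in a common one of the two arcs cut off by $u_1$ and $v_1$. Applying monotonicity of $\phi_2$ to the sub-arc from $u_1$ to $u_2$ forces $v_2=\phi_2(u_2)$ to follow $v_1=\phi_2(u_1)$ on the corresponding non-wrapping output sub-arc, while the antipodal constraints $v_i\in\llparenthesis u_i^+ u_i^-\rrparenthesis$ and $u_i\in\llparenthesis v_i^+ v_i^-\rrparenthesis$ place $u_2$ and $v_2$ on opposite sides of the divisions of the circle determined by $v_1$ and by $u_1$; this contradicts the non-crossing hypothesis. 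A short case check of the two possible common arcs finishes this part.

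The hard part will be the parity statement (i). My plan is a deformation argument: keep $S_1$ fixed and slide the vertices of $S_2$ continuously along the circle, preserving their anticlockwise order so that $S_2$ stays a $k_2$-star, until all of $V_2$ is squeezed into a single gap of $S_1$. In that terminal configuration $\phi_2$ carries all of $V_1$ into a $V_2$ contained in one gap of $S_1$, so $\Phi$ is constant and has exactly one fixed point, giving exactly one linkable pair. The number of linkable pairs can change only at the finitely many events where a vertex of $V_2$ passes a vertex $u^\ast$ of $V_1$; at such an event exactly one value of $\phi_2$ and one value of $\phi_1$ change, each to an adjacent vertex. The crux, and the step I expect to be the main obstacle, is to verify that every such event changes the number of fixed points of $\Phi$ by an even number; granting this, the parity is invariant along the deformation and equals the terminal value $1$, so the number of linkable pairs is odd. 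I would establish the event invariance by a local analysis of the at most three vertices whose fixed-point status can change, namely $u^\ast$ together with the two vertices of $V_1$ between which $\phi_1$ switches, checking in each combinatorial case (including the degenerate overlaps among these vertices) that gains and losses of fixed points occur in pairs.
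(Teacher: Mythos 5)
Your route is genuinely different from the paper's, and it is viable. The paper, after the same reduction via Observation \ref{obs_linkable_observation}, argues statically: it relabels $V_1 \cup V_2$ as $x_1, \ldots, x_{2s}$ with $s = m+n+1$, observes that a linkable pair must occupy antipodal positions $(x_i, x_{i+s})$, introduces a weight $w(i)$ (the number of $V_1$-vertices among the next $s-1$ positions minus the number among the previous $s-1$), and then runs a sign-change and alternation argument: zeros of $w$ at positions whose antipode lies in the other star alternate between the two stars along a half-window, which forces an odd count; pairwise crossing is then immediate from antipodality. You replace the alternation argument by a discrete homotopy: linkable pairs as fixed points of $\Phi = \phi_1 \circ \phi_2$, deformation of $S_2$ into a single gap of $S_1$ where the count is $1$, and parity invariance at crossing events. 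What your approach buys is a conceptual reason for the parity (it is a deformation invariant with base value $1$); what it costs is genericity bookkeeping plus the local event analysis, which is roughly as much work as the paper's entire argument.

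The crux you defer, evenness of the change at each event, is true and closes essentially as you planned, but it is cleaner to organize it by antipodal pairs than by your three vertices. Since linkable partners sit at antipodal positions among the $2s$ points, when $y \in V_2$ (position $1$) and $u^* \in V_1$ (position $2$) swap, the only antipodal pairs whose linkability can change are $\{1, s+1\}$ and $\{2, s+2\}$: every other pair keeps the memberships of its two endpoints, and both swapped positions lie inside the same arc of it, so its arc-counts are unchanged as well. Writing $w_j$ for the point at position $j$, and $A$, $B$ for the numbers of $V_1$-points at positions $3, \ldots, s$ and $s+2, \ldots, 2s$, one has $A + B + [\, w_{s+1} \in V_1 \,] = 2k_1$. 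Checking the four cases for the memberships of $w_{s+1}$ and $w_{s+2}$ against this identity shows the two pairs always change in a correlated way: if $w_{s+1}$ and $w_{s+2}$ lie in the same star, one pair's loss is exactly the other pair's gain (net $0$); if they lie in different stars, the two pairs gain together or lose together (net $\pm 2$ or $0$). Hence the change is always even, and your terminal count of $1$ gives the odd parity. (Your three vertices $u^*$, $a$, $a'$ are precisely the possible $V_1$-members of these two pairs, and the degenerate coincidences you set aside cannot occur when $k_i \ge 1$.) Finally, part (ii) needs no monotonicity argument at all: by your own gap description, each linkable pair is antipodal in the combined order, and two chords joining antipodal positions with four distinct endpoints always cross.
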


\begin{proof}
Let $S_1$ and $S_2$ be an $m$-star and an $n$-star, respectively.
By Observation \ref{obs_linkable_observation}, we may assume $V_1 \cap V_2 = \emptyset$.

Note that $v_1 \in V_1$ and $v_2 \in V_2$ are linkable if and only if $\llparenthesis v_1 v_2 \rrparenthesis$ and $\llparenthesis v_2 v_1 \rrparenthesis$ contain the same number of vertices of $V_i \setminus \{ v_i \}$ for $i = 1, 2$.
Let us relabel $V_1 \cup V_2$ as $\{ x_1, x_2, \ldots, x_{2s} \}$, where $s=m+n+1$, such that $x_1, x_2, \ldots, x_{2s}$ are arranged in anti-clockwise order.

In the following, for an integer $i$, let $\overline{i}$ denote the integer such that $\overline{i} \equiv i \pmod{2s}$ and $\overline{i} \in [1, 2s]$.   
Let us define a weight $w$, a function from $[1, 2s]$ to the set of integers;
\begin{eqnarray*}
w(i) & = & |\{ j \in [1, 2s] \,:\, x_j \in V_1 {\rm ~and~} 1 \le \overline{j-i} \le s-1  \}| \\
 & & -  |\{ j \in [1, 2s] \,:\, x_j \in V_1 {\rm ~and~} s+1 \le \overline{j-i} \le 2s-1  \}|
\end{eqnarray*}
for $1 \le i \le 2s$  (Fig. \ref{fig_linkable_weight}).
Note that for $x_i \in V_1$ and $x_j \in V_2$, $x_i$ and $x_j$ are linkable if and only if $\overline{j - i} = s$  and $w(i) = 0$.

\begin{figure}[h]
\centering
\includegraphics[scale=0.3]{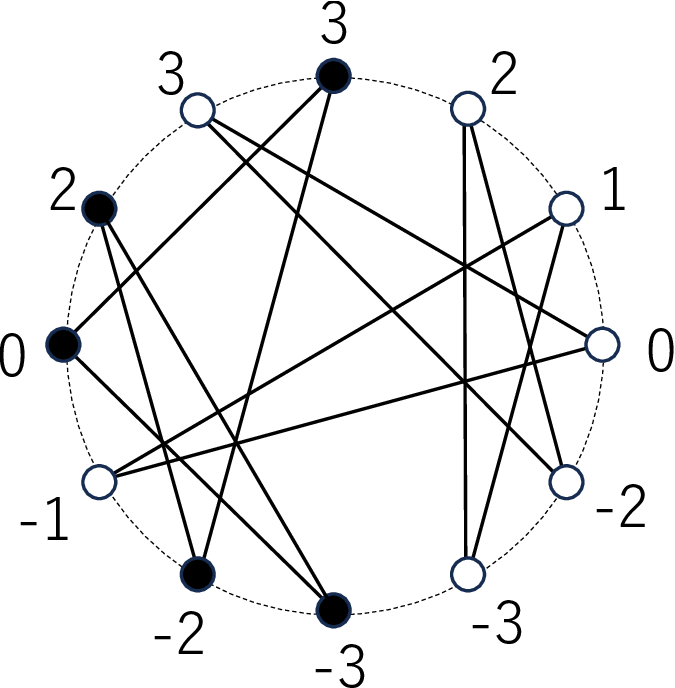}\\
\caption{The vertices $V_1$ and $V_2$ are displayed in black and white, respectively.
Labels are their weights. 
}\label{fig_linkable_weight}
\end{figure}

First, we will show that $w(\alpha) = 0$ for some $\alpha \in [1, 2s]$.

By definition of the weight $w$, we have $w(i) + w(\overline{s+i}) = 0$ for all $i \in [1, 2s]$.
Since both $2m+1$ and $2n+1$ are odd numbers, there exists some $i$ for $1 \le i \le s$ such that one of $x_i$ and $x_{s+i}$ is contained in $V_1$ and the other is contained in $V_2$.
Hence, by replacing the indices if necessary, we may assume $x_1 \in V_1$ and $x_{s+1} \in V_2$.

Let us define families of indices $W_k \subset [1, 2s]$ for $0 \le k \le 2$ as follows;
$W_k = \{ i \in [1, 2s] \,:\, x_i \in V_k, x_{\overline{s+i}} \not\in V_k \}$ for $k=1,2$ and $W_0 = [1,2s] \setminus (W_1 \cup W_2)$.
Then, we have $|W_1| = |W_2|$, and both $|W_1|$ and $|W_2|$ are odd numbers.
Furthermore, we have $w(i)$ is even for $i \in W_1 \cup W_2$, and $w(i)$ is odd for $i \in W_0$.
We will list the difference between $w(\overline{i+1})$ and $w(i)$;
\begin{eqnarray}
 & & w(\overline{i+1}) \notag \\
 & = &
   \left\{
\begin{array}{ll}
w(i) + 2 &{\rm ~for~}(i, \overline{i+1}) \in W_2 \times W_2 \\
w(i) - 2 &{\rm ~for~}(i, \overline{i+1}) \in W_1 \times W_1 \\
w(i) + 1 &{\rm ~for~}(i, \overline{i+1}) \in (W_0 \times W_2) \cup (W_2 \times W_0)\\
w(i) - 1 &{\rm ~for~}(i, \overline{i+1}) \in (W_0 \times W_1) \cup (W_1 \times W_0)\\
w(i) &{\rm ~for~}(i, \overline{i+1}) \in (W_1 \times W_2) \cup (W_2 \times W_1) \cup (W_0 \times W_0).
\end{array}
   \right.
\end{eqnarray}

From (1), for $i$, $j$ with $1 \le i < j \le 2s$, if $\{ i, j \} \subset W_1 \cup W_2$ and $k \in W_0$ for all $i < k < j$, we have
\begin{eqnarray}
w(j) & = & 
   \left\{
\begin{array}{ll}
w(i) + 2 &{\rm ~for~}(i,j) \in W_2 \times W_2 \\
w(i) - 2 &{\rm ~for~}(i,j) \in W_1 \times W_1 \\
w(i) &{\rm ~for~}(i,j) \in (W_1 \times W_2) \cup (W_2 \times W_1).
\end{array}
   \right.
\end{eqnarray} 
 
Since $w(s+1) = -w(1)$, we have some $\alpha$ with $1 \le \alpha \le s$ such that $w(\alpha) = 0$.
We may assume $\alpha \in W_1$.
Let $\beta$ be the smallest $i$ such that $i \in W_1 \cup W_2$ and $w(i) = 0$ with $\alpha < i \le \alpha + s$.
Since $\alpha + s \in W_2$ and $w(\alpha + s) = 0$, $\beta$ is well-defined, and we have $\alpha < \beta \le \alpha + s$.

We claim that $\beta \in W_2$.
Suppose to a contradiction that $\beta \in W_1$.
From (2) and by the minimality of $\beta$, we have $w(i) < 0$ for all $i$ satisfying $\alpha < i < \beta$ and $i \in W_1 \cup W_2$.
It follows that $w(\beta) < 0$, a contradiction.

Let us define $U = \{ i \in W_1 \cup W_2 \,:\,  \alpha \le i < \alpha + s, w(i)=0 \}$ and let $U = \{ \alpha_1, \alpha_2, \ldots, \alpha_t \}$ with $\alpha = \alpha_1 < \alpha_2 < \ldots < \alpha_t < \alpha_{t+1} = \alpha+s$.
Then we have $\alpha_1 \in W_1$ and $\alpha_2 \in W_2$.
In the same manner, we have $\alpha_i \in W_1$ for $i$ odd, and $\alpha_i \in W_2$ for $i$ even.
Since $\alpha_{t+1}$, which is $\alpha + s$, is contained in $W_2$, we have $t$ is odd.

Furthermore, for all linkable pairs $( x_{\alpha_i}, x_{\alpha_i+s} )$ with $1 \le i \le t$, their corresponding diagonals $\overline{x_{\alpha_i} x_{\alpha_i+s}}$'s are crossing with each other.
\end{proof}

The next lemma shows that if a pair of stars has multiple linkable pairs in a star decomposition $\mathcal{S}$, we can build another star decomposition $\mathcal{S}'$ from $\mathcal{S}$ with two additional diagonals.

\begin{la}\label{la_star_subdivision}
Let $S_1(V_1, E_1)$ and $S_2(V_2, E_2)$ be two distinct stars having two linkable pairs $(u_1, u_2)$ and $(v_1, v_2)$.
Then we have a new pair of stars $S'_1(V'_1, E'_1)$ and $S'_2(V'_2, E'_2)$ such that   $V'_1 \cup V'_2 = V_1 \cup V_2$ and $E'_1 \cup E'_2 = E_1 \cup E_2 \cup \{ u_1 u_2, u_2 u_1, v_1 v_2, v_2 v_1 \}$.
\end{la}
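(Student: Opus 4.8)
The plan is to realize the two new stars as the cycles produced by $f_{\rm next}$ once both diagonals are inserted, and then to prove they are stars by an interior-angle count rather than by tracking edge lengths. Throughout I assume, as in the intended application, that $S_1$ and $S_2$ are independent, which is what makes Observation \ref{obs_linkable_observation} and Lemma \ref{la_linkable_pairs_of_two_stars} applicable. By Observation \ref{obs_linkable_observation} I may assume $V_1\cap V_2=\emptyset$, the two linkable pairs and their crossing being preserved. Write $d_1=\overline{u_1u_2}$ and $d_2=\overline{v_1v_2}$ (with $u_1,v_1\in V_1$, $u_2,v_2\in V_2$) and set $D=\{d_1,d_2\}$; by Lemma \ref{la_linkable_pairs_of_two_stars} the segments $d_1$ and $d_2$ cross. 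I then work inside the subdivided polygon $P(V_1\cup V_2,E_1\cup E_2,D)$ and its cycle decomposition.

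The decisive local step is the behaviour of $f_{\rm next}$ at the four diagonal endpoints. Since $(u_1,u_2)$ is linkable, $d_1$ splits the interior angle of $S_1$ at $u_1$ and of $S_2$ at $u_2$; hence at each of $u_1,u_2,v_1,v_2$ the diagonal direction lies strictly between the two star directions, so $f_{\rm next}$ sends the incoming star-edge to the outgoing diagonal-edge and the incoming diagonal-edge to the outgoing star-edge, while at every other vertex it simply follows the original star. Tracing these rules I would show that exactly two cycles arise, namely
\[
S'_1=u_1\xrightarrow{d_1}u_2\xrightarrow{S_2}v_2\xrightarrow{d_2}v_1\xrightarrow{S_1}u_1,\qquad
S'_2=u_2\xrightarrow{d_1}u_1\xrightarrow{S_1}v_1\xrightarrow{d_2}v_2\xrightarrow{S_2}u_2,
\]
that the two $S_1$-arcs partition $E_1$ and the two $S_2$-arcs partition $E_2$, that each cycle is simple (the two transits at each endpoint lie in different cycles, so the cycles meet only in the four endpoints), and that consequently $V'_1\cup V'_2=V_1\cup V_2$ and $E'_1\cup E'_2=E_1\cup E_2\cup\{u_1u_2,u_2u_1,v_1v_2,v_2v_1\}$, as required. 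One also checks each consecutive triple is anti-clockwise, so $S'_1,S'_2$ are genuine cyclic polygons.

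To see that they are stars I would count interior angles. At every vertex other than the four endpoints the incident edges are unchanged, so its interior angle is exactly as in the original stars; at each endpoint the diagonal cuts the original interior angle into the two pieces inherited by $S'_1$ and $S'_2$ (this is precisely linkability). Writing $\Sigma$ for the sum of interior angles and using that each star has interior-angle sum $\pi$ (as in the proof of Lemma \ref{la_num_stars}), summation over all vertices gives
\[
\Sigma(S'_1)+\Sigma(S'_2)=\Sigma(S_1)+\Sigma(S_2)=\pi+\pi=2\pi .
\]
Each $S'_i$ is a cyclic polygon, so $\Sigma(S'_i)=\pi(N_i-2r_i)$ with $N_i$ its number of vertices and $r_i\ge 1$ its rotation number, whence $\Sigma(S'_i)$ is a positive integer multiple of $\pi$; two such numbers summing to $2\pi$ force $\Sigma(S'_1)=\Sigma(S'_2)=\pi$.

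Finally, $\Sigma(S'_i)=\pi$ yields $r_i=(N_i-1)/2$, the maximum rotation number possible on $N_i$ vertices. Here I would invoke the elementary facts that the edge lengths $\ell_1,\dots,\ell_{N_i}$ of a cyclic polygon satisfy $\sum_j\ell_j=r_iN_i$, while the anti-clockwise condition forces $\ell_j+\ell_{j+1}\le N_i-1$ for every $j$. With $r_i=(N_i-1)/2$ these give $\sum_j(\ell_j+\ell_{j+1})=N_i(N_i-1)$ with each of the $N_i$ summands at most $N_i-1$, so every $\ell_j+\ell_{j+1}=N_i-1$; as $N_i$ is odd, all $\ell_j$ coincide and equal $(N_i-1)/2$, so $S'_i$ is a $\tfrac{N_i-1}{2}$-star. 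I expect the main obstacle to be the bookkeeping in the second paragraph — pinning down the $f_{\rm next}$ pairing at the four endpoints and confirming that exactly two simple cycles result — together with cleanly isolating the auxiliary statement that a cyclic polygon of interior-angle sum $\pi$ is necessarily a star.
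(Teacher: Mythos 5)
Your proof is correct in substance, and while it builds the same two cycles as the paper (the paper writes them out explicitly as $u_1 u_2 u_2^{+} P_2 v_2^{-} v_2 v_1 v_1^{+} Q_1 u_1^{-} u_1$ and its counterpart rather than via $f_{\rm next}$), your verification that they are stars takes a genuinely different route. The paper uses the characterization that a cyclic polygon is a star if and only if every two of its edges share a common point: the only nontrivial case is one edge from each original arc, which it settles by partitioning the circle into four arcs $R_1,\dots,R_4$ at $u_1,v_1,u_2,v_2$ and showing one edge links $R_1$ with $R_3$ and the other links $R_2$ with $R_4$. You instead conserve the total interior angle (linkability means each diagonal splits the angles at its endpoints, so $\Sigma(S'_1)+\Sigma(S'_2)=2\pi$), use the formula $\pi(N-2r)$ to force $\Sigma(S'_1)=\Sigma(S'_2)=\pi$, and then prove the rigidity statement that rotation number $(N-1)/2$ forces all edge lengths to equal $(N-1)/2$, via the two counting facts $\sum_j\ell_j=rN$ and $\ell_j+\ell_{j+1}\le N-1$. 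Both facts are correct (the first is a winding-number count, the second follows from the anti-clockwise condition for consecutive edges), so the argument is sound; it replaces the paper's four-region case analysis by a global count, and it even yields the crossing of the two diagonals as a consequence rather than as an input.

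One caveat deserves attention: your reduction to $V_1\cap V_2=\emptyset$ via Observation \ref{obs_linkable_observation} does not transfer back automatically, because ``the reconstructed cycles are stars'' is not among the invariants (isomorphism type, crossings, linkable pairs) that the observation preserves. In the shared-vertex situation — precisely the double-star case in which the paper applies this lemma — you must additionally check that the two copies of each shared vertex land in \emph{different} new cycles, so that un-splitting leaves each cycle simple; otherwise a merged vertex would be visited twice and the cycle would not even be a cyclic polygon. This can be extracted from the same regional analysis the paper uses (the $S_1$-arc and $S_2$-arc of $S'_1$ lie in $R_1\cup R_3$ and $R_2\cup R_4$ respectively, and the two copies of a shared vertex are adjacent on the circle, hence fall in the same region), but it needs to be said; the paper sidesteps the issue by arguing directly with the original vertex sets.
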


\begin{proof}
Let $\angle u_i^{-} u_i u_i^{+}$ and $\angle v_i^{-} v_i v_i^{+}$ be angles of $S_i$ for $i=1,2$.
In $S_i$, the vertex set $V_i$ is traversed along the directions of edges as $u_i u_i^{+} P_i v_i^{-} v_i v_i^{+} Q_i u_i^{-} u_i$ with some vertex sets $P_i, Q_i \subset V_i$ for $i=1, 2$. 

By using additional edges $u_1 u_2, u_2 u_1, v_1 v_2, v_2 v_1$, we have two cycles $S'_1(V'_1, E'_1)$ and $S'_2(V'_2, E'_2)$ as follows;
for $S'_1$, by using $u_1 u_2$ and $v_2 v_1$, $V'_1$ is traversed as $u_1 u_2 u_2^{+} P_2 v_2^{-} v_2 v_1 v_1^{+} Q_1 u_1^{-} u_1$, and
for $S'_2$, by using $u_2 u_1$ and $v_1 v_2$, $V'_2$ is traversed as $u_2 u_1 u_1^{+} P_1 v_1^{-} v_1 v_2 v_2^{+} Q_2 u_2^{-} u_2$ (Fig. \ref{fig_star_subdivision_proof}).

\begin{figure}[h]
\centering
\includegraphics[scale=0.3]{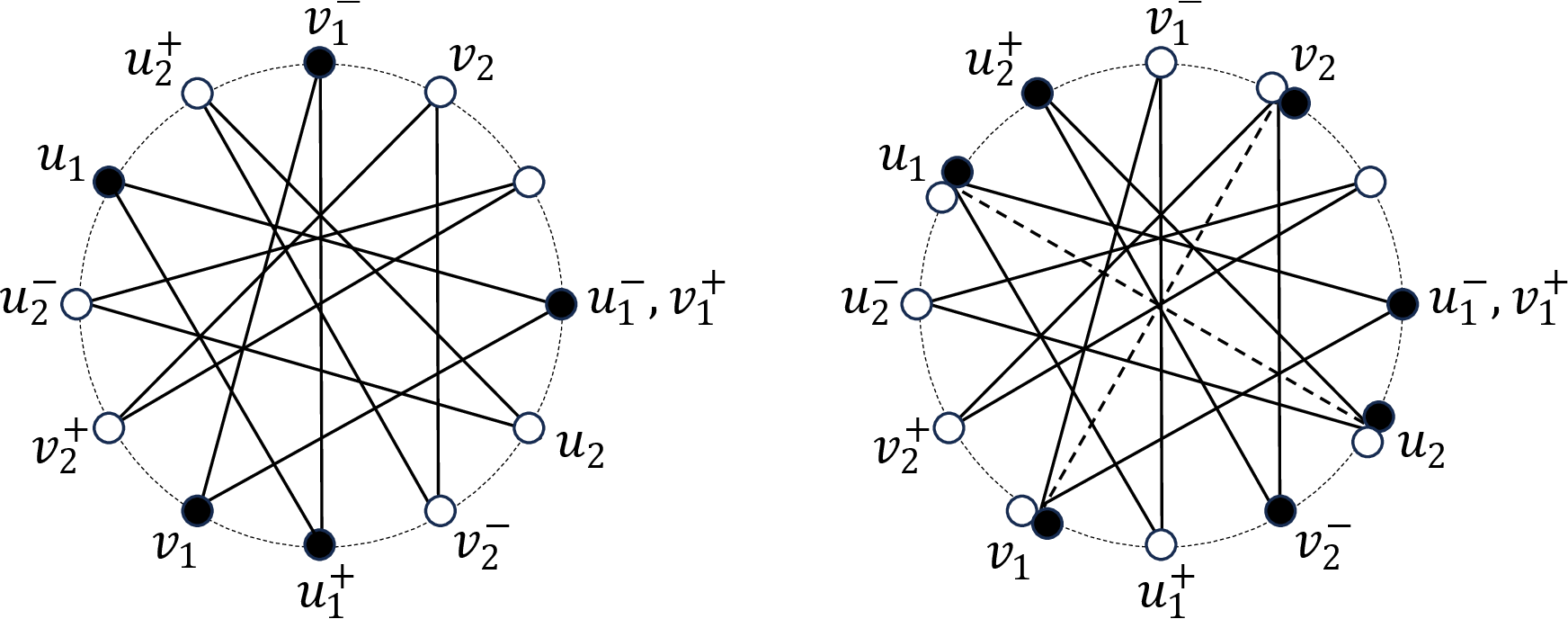}\\
\caption{The vertices of $S_1$ and $S_2$ are displayed in black and white, respectively(left).
By using additional diagonals $\overline{u_1 u_2}$ and $\overline{v_1 v_2}$, we have two stars $S'_1$ and $S'_2$.
The vertices of $S'_1$ and $S'_2$ are displayed in black and white, respectively(right). 
}\label{fig_star_subdivision_proof}
\end{figure}

Next, we will show that $S'_1$ and $S'_2$ are stars.
By symmetry, we only show that $S'_1$ is a star.
It suffices to show that for two edges $e$ and $f$ of $E'_1$, they share a common point.

Firstly, if $e=u_1 u_2$ and $f=v_2 v_1$, they are crossing by Lemma \ref{la_linkable_pairs_of_two_stars}.
Secondly, if $e \in \{ u_1 u_2, v_2 v_1 \}$ and $f \in E_1 \cup E_2$, then $e$ and $f$ share a common point, because for two stars, a diagonal corresponding to a linkable pair and an edge of original two stars always share a common point in general. 
Hence, we may assume $e$ is an edge of  $u_2 u_2^{+} P_2 v_2^{-} v_2$ and $f$ is an edge of $v_1 v_1^{+} Q_1 u_1^{-} u_1$. 

The circumference is partitioned into four parts by the vertices $u_1$, $u_2$, $v_1$ and $v_2$.
Let us define $R_i$ with $1 \le i \le 4$ as follows;
if $v_1 \in \llparenthesis u_1 u_2 \rrparenthesis$, then we have
$R_1 = \llbracket u_1 v_1 \rrbracket$,
$R_2 = \llbracket v_1 u_2 \rrbracket$,
$R_3 = \llbracket u_2 v_2 \rrbracket$,
$R_4 = \llbracket v_2 u_1 \rrbracket$,
otherwise, we have
$R_1 = \llbracket u_1 v_2 \rrbracket$,
$R_2 = \llbracket v_2 u_2 \rrbracket$,
$R_3 = \llbracket u_2 v_1 \rrbracket$,
$R_4 = \llbracket v_1 u_1 \rrbracket$.

Since any edge contained in $E_1 \cup E_2$ are crossing $u_1 u_2$ and $v_2 v_1$, we have $u_2^{+} \in R_4$ and $u_1^{-} \in R_3$.
Then we have $e$ links two vertices of $R_2$ and $R_4$ and $f$ links two vertices of $R_1$ and $R_3$.
Hence, $e$ and $f$ share a common point.
\end{proof}

The reconstruction of two stars by additional two diagonals in Lemma \ref{la_star_subdivision} is called a {\it star subdivision}.
A pair of stars $S_1$ and $S_2$ contained in a star decomposition is called a {\it double star}, if $S_1$ and $S_2$ share at least one common diagonal.
Note that Lemma \ref{la_linkable_pairs_of_two_stars} and Lemma \ref{la_star_subdivision} can be applied for a double star. 

By applying Lemma \ref{la_linkable_pairs_of_two_stars} and Lemma \ref{la_star_subdivision}, we will prove Theorem \ref{thm_num_diagonals}.

\begin{proof}[Proof of Theorem~{\upshape\ref{thm_num_diagonals}}]
For a maximal star decomposition $\mathcal{S}$ of $P$, let $s$ be the number of stars of $\mathcal{S}$.
By Lemma \ref{la_num_stars}, we have $s = n-2r$.
For a pair of stars in $\mathcal{S}$, by Lemma \ref{la_linkable_pairs_of_two_stars}, we have an odd number of linkable pairs of vertices.
If there exists a pair of stars having at least $3$ linkable pairs, by Lemma \ref{la_star_subdivision}, we have $\mathcal{S}$ is not maximal, a contradiction.
Therefore, there remain exactly $\binom{s}{2}$ linkable pairs in $\mathcal{S}$.
Hence, the number of diagonals is $p - \binom{s}{2} = p - (n-2r)(n-2r-1)/2$. 
\end{proof}



The next lemma shows that for a double star, any diagonal contained in both stars can be replaced with another diagonal to yield a new double star (Fig. \ref{fig_diagonal_flip}).

\begin{figure}[h]
\centering
\includegraphics[scale=0.3]{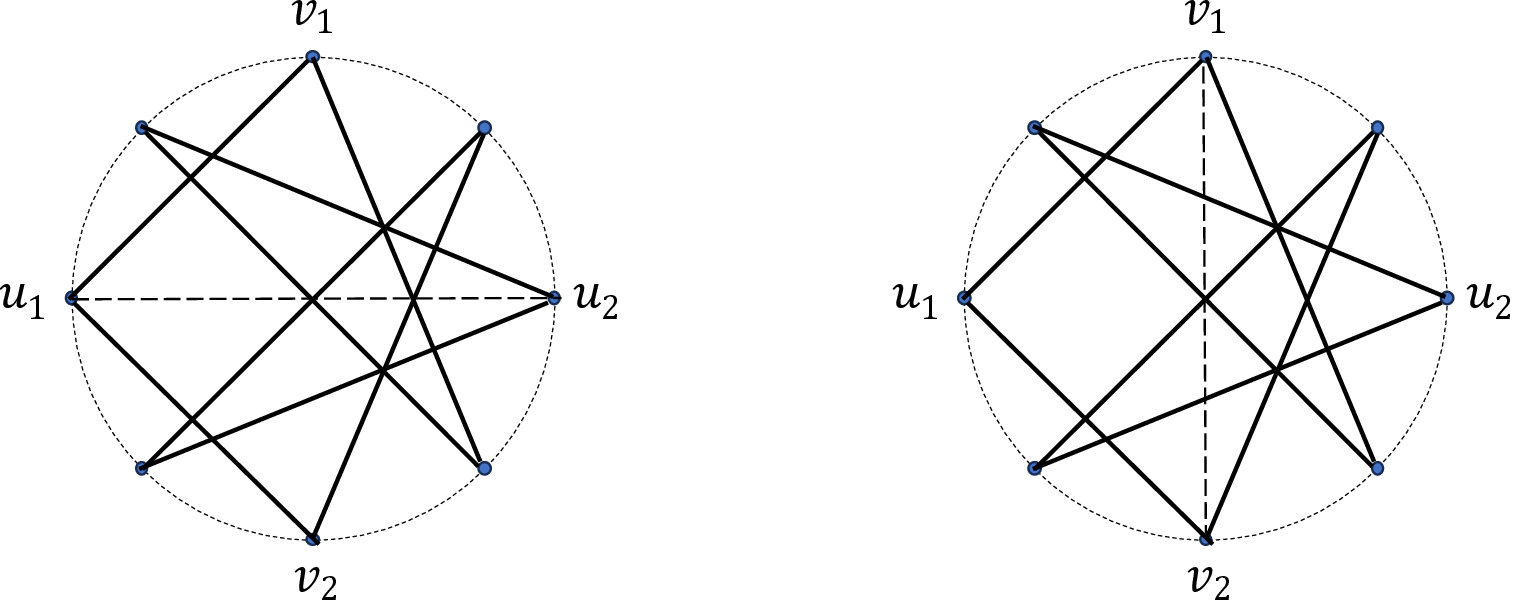}\\
\caption{A diagonal flip to replace $\overline{u_1 u_2}$ with $\overline{v_1 v_2}$.
}\label{fig_diagonal_flip}
\end{figure}

\begin{la}\label{la_diagonal_flip}
Let $\mathcal{S}$ be a double star with two stars $S_1(V_1, E_1)$ and $S_2(V_2, E_2)$ such that $e=\overline{u_1 u_2}$ is a common diagonal and $f=(v_1, v_2)$ is a linkable pair of  $S_1$ and $S_2$.
Then there exists a double star $\mathcal{S}'$ with two stars $S'_1(V'_1, E'_1)$ and $S'_2(V'_2, E'_2)$ with a common diagonal $f$ such that $V'_1 \cup V'_2 = V_1 \cup V_2$ and $E'_1 \cup E'_2 = ((E_1 \cup E_2) \setminus \{ u_1 u_2, u_2 u_1 \}) \cup \{ v_1 v_2, v_2 v_1 \}$.
\end{la}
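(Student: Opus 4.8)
The plan is to construct $S'_1$ and $S'_2$ explicitly by re-routing the two traversals of $S_1$ and $S_2$ — deleting the two directed edges arising from $e$ and splicing in the two directed edges arising from $f$ — and then to verify that the two cycles so produced are stars. First I would record the cyclic structure. Say the directed edge $u_1 u_2$ lies in $E_1$ and $u_2 u_1$ in $E_2$, and write the two traversals as $S_1 = u_1\, u_2\, P_1\, v_1\, Q_1\, u_1$ and $S_2 = u_2\, u_1\, P_2\, v_2\, Q_2\, u_2$, where $P_1$ is the directed arc of $S_1$ from $u_2$ to $v_1$, $Q_1$ the arc from $v_1$ back to $u_1$, and $P_2, Q_2$ the analogous arcs of $S_2$. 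Removing $\{u_1 u_2, u_2 u_1\}$ and inserting $\{v_1 v_2, v_2 v_1\}$ then yields precisely the two cycles $S'_1 = v_1\, v_2\, Q_2\, u_2\, P_1\, v_1$ and $S'_2 = v_2\, v_1\, Q_1\, u_1\, P_2\, v_2$. With this description the bookkeeping is immediate: $V'_1 \cup V'_2 = V_1 \cup V_2$, the edge sets are as claimed, and $f = \overline{v_1 v_2}$ is a common diagonal of $S'_1$ and $S'_2$, so $\mathcal{S}'$ is a double star once the two cycles are shown to be stars.

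Before the verification I would fix the geometry of the four distinguished vertices. Since Lemma \ref{la_linkable_pairs_of_two_stars} applies to a double star, the common diagonal $e$ and the linkable pair $f$ are two \emph{distinct} linkable pairs of $S_1$ and $S_2$, and hence their diagonals cross; in particular $u_1, u_2, v_1, v_2$ are four distinct points which, after possibly swapping the names $S_1 \leftrightarrow S_2$ and applying Observation \ref{obs_linkable_observation} to separate any remaining coincidences, interleave around the circle and cut it into four arcs. It then remains to show that $S'_1$ is a star (the claim for $S'_2$ being symmetric), i.e. that any two of its edges share a common point. Two edges both lying in $P_1$ share a common point because they are edges of the star $S_1$, and likewise two edges of $Q_2$ by the star $S_2$; the edge $f$, corresponding to a linkable pair, shares a common point with every edge of $S_1 \cup S_2$ and in particular with every edge of $P_1 \cup Q_2$.

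The only remaining case is an edge $g$ of $P_1$ (an edge of $S_1$) against an edge $h$ of $Q_2$ (an edge of $S_2$), and I expect this to be the main obstacle. I would handle it exactly as in the proof of Lemma \ref{la_star_subdivision}. The two inputs needed there are available here as well: every edge of $S_1 \cup S_2$ meets the linkable-pair diagonal $f$, and every edge of $S_1 \cup S_2$ meets $e$ too — this time simply because $e$ is itself an edge of each star, so meeting $e$ is the star property of $S_1$ and of $S_2$ — while $e$ and $f$ cross. Using these facts I would locate the endpoints of $g$ and of $h$ in the four arcs determined by $u_1, v_1, u_2, v_2$ and argue that $g$ joins a pair of opposite arcs while $h$ joins the complementary pair, forcing $g$ and $h$ to cross (or else to meet at the shared vertex $u_2$). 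Once this interleaving is established, $S'_1$ is a star, and the construction of the flipped double star $\mathcal{S}'$ is complete.
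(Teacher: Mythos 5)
Your proposal matches the paper's proof essentially step for step: the same splicing construction of $S'_1$ and $S'_2$ from the arcs of $S_1$ and $S_2$, the same reduction to the mixed case (an edge coming from the $S_1$-arc against an edge coming from the $S_2$-arc), and the same four-arc crossing argument borrowed from the proof of Lemma \ref{la_star_subdivision}. One small caveat: the common diagonal $e=\overline{u_1 u_2}$ is not itself a linkable pair of $S_1$ and $S_2$ (it is an edge of both stars and does not split the angles at $u_1$ and $u_2$), so invoking Lemma \ref{la_linkable_pairs_of_two_stars} to get the crossing of $e$ and $f$ is a mis-citation; however, the crossing follows anyway from the fact you state next, namely that the linkable-pair diagonal $f$ shares a common point with every edge of $S_1 \cup S_2$, including $e$, and has no endvertex among $u_1, u_2$.
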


\begin{proof}
Let $\angle u_1^{-} u_1 u_2$, $\angle u_1 u_2 u_2^{+}$ and $\angle v_1^{-} v_1 v_1^{+}$ be angles of $S_1$.
In $S_1$, the vertex set $V_1$ is traversed as $u_1 u_2 u_2^{+} P_1 v_1^{-} v_1 v_1^{+} Q_1 u_1^{-} u_1$ with some vertex sets $P_1, Q_1 \subset V_1$.
Let $\angle u_2^{-} u_2 u_1$, $\angle u_2 u_1 u_1^{+}$ and $\angle v_2^{-} v_2 v_2^{+}$ be angles of $S_2$.
In $S_2$, the vertex set $V_2$ is traversed as $u_2 u_1 u_1^{+} P_2 v_2^{-} v_2 v_2^{+} Q_2 u_2^{-} u_2$ with some vertex sets $P_2, Q_2 \subset V_2$.

Let us remove $e$ and add $f$ to $\mathcal{S}$.
Then, we have two cycles $S'_1(V'_1, E'_1)$ and $S'_2(V'_2, E'_2)$ as follows;
for $S'_1$, by using an additional edge $v_1 v_2$, $V'_1$ is traversed as $v_1 v_2 v_2^{+} Q_2 u_2^{-} u_2 u_2^{+} P_1 v_1^{-} v_1$, and
for $S'_2$, by using an additional edge $v_2 v_1$, $V'_2$ is traversed as $v_2 v_1 v_1^{+} Q_1 u_1^{-} u_1 u_1^{+} P_2 v_2^{-} v_2$ (Fig. \ref{fig_diagonal_flip_proof}).

\begin{figure}[h]
\centering
\includegraphics[scale=0.3]{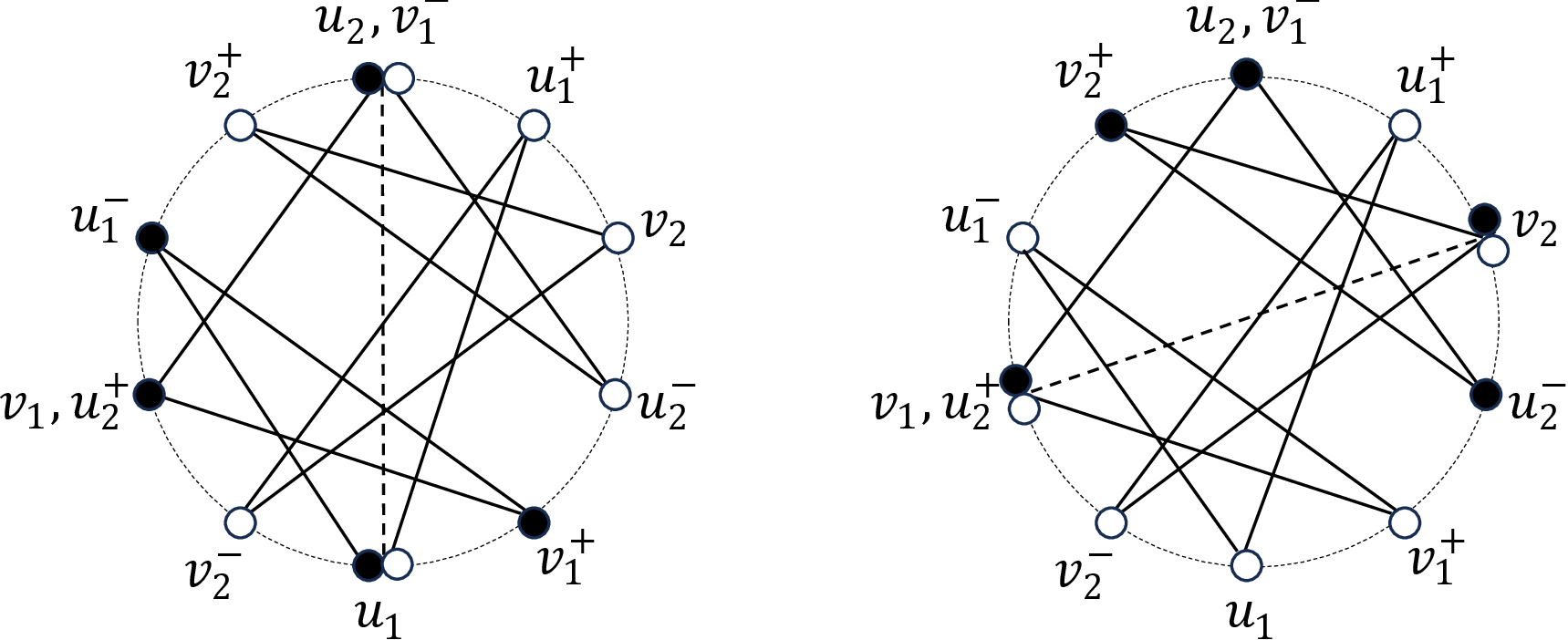}\\
\caption{The vertices of $S_1$ and $S_2$ are displayed in black and white, respectively(left).
By flipping $\overline{u_1 u_2}$ to $\overline{v_1 v_2}$, we have a new double star with $S'_1$ and $S'_2$.
The vertices of $S'_1$ and $S'_2$ are displayed in black and white, respectively(right). 
}\label{fig_diagonal_flip_proof}
\end{figure}

What we want to show is that $S'_1$ and $S'_2$ are stars.
By symmetry, it suffices to show that $S'_1$ is a star.
For two edges $g_1$ and $g_2$ of $S'_1$, we will show that $g_1$ and $g_2$ share a common point.
If $g_1 = v_1 v_2$, since it corresponds to a linkable pair of $S_1$ and $S_2$, we have that $g_1$ and any edge of $S_1$ and $S_2$ share a common point.
Hence, we may assume $g_1$ is an edge of $v_2 v_2^{+} Q_2 u_2^{-} u_2$ and $g_2$ is an edge of $u_2 u_2^{+} P_1 v_1^{-} v_1$.
The circumference is partitioned into four parts by the vertices $u_1$, $u_2$, $v_1$ and $v_2$.
Let us define $R_i$ with $1 \le i \le 4$ as follows;
if $v_1 \in \llparenthesis u_1 u_2 \rrparenthesis$, then we have
$R_1 = \llbracket u_1 v_1 \rrbracket$,
$R_2 = \llbracket v_1 u_2 \rrbracket$,
$R_3 = \llbracket u_2 v_2 \rrbracket$,
$R_4 = \llbracket v_2 u_1 \rrbracket$,
otherwise, we have
$R_1 = \llbracket u_1 v_2 \rrbracket$,
$R_2 = \llbracket v_2 u_2 \rrbracket$,
$R_3 = \llbracket u_2 v_1 \rrbracket$,
$R_4 = \llbracket v_1 u_1 \rrbracket$.

In any case, we have $u_2^{-} \in R_1$.
Then we have $g_1$ links $R_1$ and $R_3$.
Furthermore, we have $u_2^{+} \in R_4$.
Then we have $g_2$ links $R_2$ and $R_4$.
Hence, $g_1$ and $g_2$ share a common point, as claimed.
\end{proof}

By applying Lemma \ref{la_diagonal_flip}, we will prove Theorem \ref{thm_flip_uniqueness}.

\begin{proof}[Proof of Theorem~{\upshape\ref{thm_flip_uniqueness}}]
There uniquely exists a double star with two stars $S_1$ and $S_2$ having $e$ as a common diagonal.
Put $e = \overline{u_1 u_2}$.
Since $(E(S_1) \cup E(S_2)) \setminus \{ u_1 u_2, u_2 u_1 \}$ forms a cycle $C$ other than a star, an additional diagonal $f$ needs to split two angles of $C$.
Since $S_i$ itself contains no linkable pair for $i=1, 2$, $f$ corresponds to a linkable pair of $S_1$ and $S_2$, which uniquely exists by Lemma \ref{la_linkable_pairs_of_two_stars}, since $\mathcal{S}$ is maximal.
Therefore, by Lemma \ref{la_diagonal_flip}, we have a star decomposition $\mathcal{S}'$ with respect to $(D \setminus \{ e \}) \cup \{ f \}$.
\end{proof}

\section{Proof of Theorem \ref{thm_star_decomposition}}

Let $P(V, E)$ be a cyclic polygon.
For $v \in V$, let $\mathcal{S}_v$ be defined as the set of all stars $S$ with respect to some set of diagonals such that $v^{-} v$ is an edge of $S$.
Let $S \in \mathcal{S}_v$ such that $E(S) = \{ v_0 v_1, v_1 v_2, \ldots, v_{2k-1} v_{2k}, v_{2k} v_0 \}$, where $v_1 = v$.
$S$ is called a {\it border star} with respect to $P$ and $v$, if $v_{2i-1} v_{2i}$'s are edges of $P$ for all $1 \le i \le k$, and with respect to this condition, a sequence $(\angle v_1 v_2 v_3, \angle v_3 v_4 v_5, \ldots, \angle v_{2k-1} v_{2k} v_0)$ is the minimum in the lexicographic order (Fig. \ref{fig_border_star}).
Let us denote the border star with respect to $P$ and $v$ by $S^{\ast}(P,v)$, if such a star exists.

\begin{figure}[h]
\centering
\includegraphics[scale=0.3]{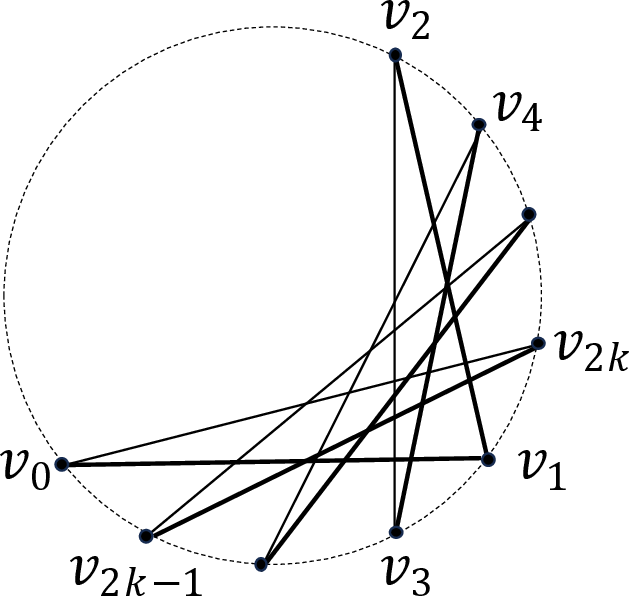}\\
\caption{A border star $S^{\ast}(P,v_1)$ with respect to an underlying cyclic polygon $P$ and a vertex $v_1$.
Thick lines are edges of $P$. 
}\label{fig_border_star}
\end{figure}

For an edge $uv \in E$ of a cyclic polygon $P(V, E)$, $uv$ is called {\it minimal} if there is no edge $xy \in E$ such that $\llbracket xy \rrbracket \subset \llparenthesis uv \rrparenthesis$. 
  
\begin{prop}\label{prop_yield_border_star}
Let $\mathcal{S}$ be a maximal star decomposition of a cyclic polygon $P(V, E)$.
Let $uv$ be a minimal edge of $E$.
Then $\mathcal{S}$ can be transformed into a maximal star decomposition $\mathcal{S}^{\ast}$ by a finite sequence of diagonal flips, where $\mathcal{S}^{\ast}$  contains the border star $S^{\ast}(P,v)$.
\end{prop}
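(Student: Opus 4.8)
The plan is to follow the single star of $\mathcal{S}$ that carries the given edge and to deform it into $S^{\ast}(P,v)$ by a sequence of diagonal flips, each forced to move the star closer to the boundary. Since $uv = v^{-}v$ lies in $E$, it belongs to a unique star $S_0 \in \mathcal{S}$, whose traversal I write as $v_0 v_1 \cdots v_{2k} v_0$ with $v_0 = v^{-}$ and $v_1 = v$. Before anything else I would check that $S^{\ast}(P,v)$ is well defined: running the boundary-hugging choice its definition prescribes (take $v_{2i-1}v_{2i}$ to be an edge of $P$, then turn as little as possible), the minimality of $uv$ guarantees that no edge of $P$ is buried inside $\llparenthesis uv \rrparenthesis$ to obstruct the construction, so a star through $uv$ with all odd edges in $E$ exists and the lexicographically least one is determined.

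Next I would attach to each maximal decomposition the potential $\Phi(\mathcal{S})$ obtained by reading $S_0$ in traversal order and recording the turning angles $\angle v_1 v_2 v_3, \angle v_3 v_4 v_5, \ldots, \angle v_{2k-1} v_{2k} v_0$, with the convention that a starting edge $v_{2i-1}v_{2i}$ that is \emph{not} an edge of $P$ poisons its coordinate with a value larger than every genuine angle; sequences of different lengths (the size of $S_0$ may change under flips) are compared lexicographically. This is exactly the order used to single out the border star, so $\Phi$ attains its minimum precisely when $S_0 = S^{\ast}(P,v)$, and since only finitely many stars pass through $uv$, $\Phi$ ranges over a finite totally ordered set. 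A diagonal flip keeps the number of diagonals fixed, hence preserves maximality (Theorem \ref{thm_num_diagonals}), so the whole proposition reduces to one progress step: whenever $S_0 \ne S^{\ast}(P,v)$, there is a flip after which the star through $uv$ has strictly smaller $\Phi$.

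For the progress step I would locate the first coordinate in which $S_0$ differs from $S^{\ast}(P,v)$, at some vertex $v_{2i}$, where either $v_{2i-1}v_{2i}$ fails to be an edge of $P$ or the angle $\angle v_{2i-1}v_{2i}v_{2i+1}$ exceeds the border minimum, and I would flip the diagonal of $S_0$ seated at this deviation against the neighbouring star $S_1$ with which it forms a double star. By Theorem \ref{thm_flip_uniqueness} that diagonal has a unique flip partner, namely the single linkable pair of the double star supplied by Lemma \ref{la_linkable_pairs_of_two_stars}, and Lemma \ref{la_diagonal_flip} guarantees the result is again two stars, so the new decomposition $\mathcal{S}'$ is maximal and still carries $uv$ in one star $S_0'$. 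The substance is to show the flip is \emph{downward}: the linkable pair offered by the double star lies on the boundary side of the removed diagonal, so it either converts $v_{2i-1}v_{2i}$ into an edge of $P$ or shrinks the turn at $v_{2i}$, while the locality of the flip leaves the already-optimal initial coordinates of $\Phi$ unchanged.

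The hard part will be exactly this downward verification. One must combine the crossing structure of Lemma \ref{la_linkable_pairs_of_two_stars} (all linkable pairs of the double star cross, which pins down on which side of the old diagonal the new one falls) with the minimality of $uv$ (which forbids the flip from escaping outward into a buried edge) to be sure that the unique available flip is the one decreasing $\Phi$ rather than increasing it, and that it disturbs none of the earlier coordinates. Once strict monotonicity of $\Phi$ is established, finiteness of its range forces termination, and the decomposition reached at the end is the desired $\mathcal{S}^{\ast}$ containing $S^{\ast}(P,v)$. A secondary technical point worth isolating is that the single potential $\Phi$ must encode both defining conditions of the border star — the odd edges lying in $E$ and the lexicographic minimality of the angles — so that no separate phase is needed to first install the boundary edges and then optimise the angles.
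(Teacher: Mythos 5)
Your potential $\Phi$ is essentially the paper's own ``angle sequence'' (the paper encodes your poisoned coordinates quantitatively, setting $\theta_i = \angle u_{i-1}u_iu_i^{+} - \angle u_{i-1}u_iu_{i+1}$ for odd $i$, which vanishes exactly when $u_iu_{i+1}$ is an edge of $P$), and the high-level plan of lexicographic minimization toward $S^{\ast}(P,v)$ is the right one. But there is a genuine gap exactly where you flag ``the hard part'': the progress step is never proved, and as you state it, it is not even the right statement to aim for. You propose to flip the diagonal of $S_0$ seated at the first deviation and to show that its \emph{unique} flip partner (Theorem \ref{thm_flip_uniqueness}) is downward for $\Phi$. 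The paper does not prove this, and in one of the two cases nothing like it happens: when the first deviation sits at an \emph{even} position (the odd-indexed edge $v_{\alpha-1}v_{\alpha}$ already lies in $E$, but the turn at $v_{\alpha}$ is not the border-minimal one), the paper produces no flip at all; it shows that this configuration forces a \emph{second} linkable pair between $S_1$ and a neighbouring star $S'$, which contradicts maximality of the decomposition (via Lemma \ref{la_star_subdivision}). So the even-position deviation is impossible in a maximal decomposition rather than repairable by a flip, and a proof resting entirely on ``the unique flip at the deviation decreases $\Phi$'' is trying to prove something that the geometry does not support.

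The second missing ingredient is the tool that makes both cases work: the paper's Lemma \ref{la_find_linkable_pair}, which exploits the minimality of the edge $uv$ to locate a linkable pair $(a,b)$ between two stars with the endpoint $a$ confined to a prescribed arc $\llbracket x a_1 \rrbracket$. This positional control is precisely what guarantees, in the odd-position case, that flipping $\overline{v_{\alpha}z}$ to the new diagonal yields a star containing $v_0v_1, \ldots, v_{\alpha}w$, hence a strictly smaller angle sequence; your appeal to the crossing structure of Lemma \ref{la_linkable_pairs_of_two_stars} together with ``minimality of $uv$'' gestures at this but proves nothing. Note finally that the paper sidesteps your global monotone-descent requirement by an extremal argument: among all decompositions reachable by flips from $\mathcal{S}$, it fixes one whose star through $uv$ has lexicographically least angle sequence, and derives a contradiction if that star is not $S^{\ast}(P,v)$. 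This requires only that \emph{one} improving flip exist at the supposed minimum (odd case), never that every step of a constructed sequence be downward --- a strictly weaker statement, and the one that is actually provable. Your reduction of the proposition to a single progress step is sound in outline, but that step, which is the entire technical content of the paper's proof, is absent.
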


In order to prove Proposition \ref{prop_yield_border_star}, we will use the following lemma.

\begin{la}\label{la_find_linkable_pair}
Let $S_A$ and $S_B$ be two distinct stars in a common star decomposition of a cyclic polygon $P$ with a minimal edge $xy$.
If $a_0 a_1$ and $xy$ are edges of $S_A$ and $b_0 b_1$ is an edge of $S_B$ satisfying that $b_0, y, b_1, a_0, x$ are arranged in anti-clockwise order and $a_1 \in \llbracket x b_0 \rrbracket$,
then there exists a linkable pair $(a, b) \in V(S_A) \times V(S_B)$ such that $a \in \llbracket x a_1 \rrbracket$.
\end{la}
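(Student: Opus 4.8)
The plan is to reuse the weight function of Lemma~\ref{la_linkable_pairs_of_two_stars} and to locate a zero of it inside a prescribed arc, rather than merely to establish that a zero exists. First I would invoke Observation~\ref{obs_linkable_observation} to split every vertex of $V(S_A)\cap V(S_B)$ and reduce to the disjoint case $V(S_A)\cap V(S_B)=\emptyset$; conditions (3) and (4) of that observation guarantee that the crossing pattern and the set of linkable pairs are unchanged, and one checks that the prescribed cyclic order of $x,y,a_0,a_1,b_0,b_1$, the minimality of $xy$, and the target arc $\llbracket x a_1\rrbracket$ all survive the splitting. Taking $S_A$ to be an $m$-star and $S_B$ an $n$-star, I relabel $V(S_A)\cup V(S_B)$ anti-clockwise as $x_1,\dots,x_{2s}$ with $s=m+n+1$ and reintroduce the weight $w$ and the index sets $W_0,W_1,W_2$ of Lemma~\ref{la_linkable_pairs_of_two_stars}. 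Recall that $w$ is odd on $W_0$, so every zero of $w$ lies on $W_1\cup W_2$; a zero on $W_1$ is an $S_A$-vertex whose antipode lies in $S_B$, and hence yields a linkable pair whose $S_A$-endpoint sits at that very index.

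Next I would translate the hypotheses into statements about $w$ near the block of indices $[i_x,i_{a_1}]$ corresponding to $\llbracket x a_1\rrbracket$. Since $a_1\in\llbracket x b_0\rrbracket$ and $b_0,y,b_1,a_0,x$ are anti-clockwise, the full cyclic order is $b_0,y,b_1,a_0,x,a_1$, so $\llparenthesis xy\rrparenthesis$ contains both $a_1$ and $b_0$. Because $xy$ and $a_0a_1$ are edges of the $m$-star $S_A$, the vertices $y$ and $a_1$ are the $m$-th vertices of $V(S_A)$ read anti-clockwise from $x$ and from $a_0$, and this fixes how many $S_A$-vertices fall in the forward and backward half-arcs at $x$ and at $a_1$. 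The minimality of $xy$ enters here: no edge of $P$ has its whole span inside $\llparenthesis xy\rrparenthesis$, and this is what controls the number of $S_B$-vertices sitting in $\llparenthesis x a_1\rrparenthesis$ and in $\llparenthesis a_1 b_0\rrparenthesis$, hence the actual values of $w$ at the two ends of the block.

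Finally, I would read off the signs of $w$ at the endpoints. The goal is either $w(i_x)=0$ with $i_x\in W_1$, in which case $a=x\in\llbracket x a_1\rrbracket$ already gives the linkable pair, or else that $w(i_x)$ and $w(i_{a_1})$ have opposite signs. In the latter case the transition rules (1) and (2) of Lemma~\ref{la_linkable_pairs_of_two_stars} — consecutive values of $w$ differ by at most one, and a strict decrease is tied to an index of $W_1$ while an increase is tied to $W_2$ — force a sign change of $w$ across the block to pass through a zero located on $W_1$, and no later than $i_{a_1}$. That zero supplies an index $i\in[i_x,i_{a_1}]$ with $x_i\in V(S_A)$ and antipode in $V(S_B)$, so $(x_i,x_{\overline{i+s}})$ is the required linkable pair with $x_i\in\llbracket x a_1\rrbracket$.

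The main obstacle is the sign determination in the last step: converting the minimality of $xy$ into a precise count of the intervening $S_B$-vertices, and thereby pinning down the signs of $w(i_x)$ and $w(i_{a_1})$, is the delicate part. I expect minimality to forbid an $S_B$-vertex from lying in $\llparenthesis xy\rrparenthesis$ together with both of its $P$-neighbours, which is exactly the combinatorial input that bounds these counts; the care lies in checking that this yields genuinely opposite signs (and a zero on $W_1$) rather than a coincidental cancellation that would place the relevant zero outside $[i_x,i_{a_1}]$ or on $W_2$.
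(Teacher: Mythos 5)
Your proposal is a plan built around the weight function of Lemma \ref{la_linkable_pairs_of_two_stars}, but the step that actually carries the content of the lemma --- pinning down the values (or at least the signs) of $w$ at the two ends of the block $[i_x,i_{a_1}]$ --- is never carried out; you yourself flag it as ``the delicate part.'' That is precisely where the whole difficulty lives, and there are two concrete reasons to doubt it can be filled in the form you describe. First, minimality of $xy$ is a condition on the edges of the ambient polygon $P$, not on $S_B$: edges of $S_B$ lying inside $\llparenthesis xy \rrparenthesis$ may be diagonals shared with \emph{other} stars of the decomposition, so minimality does not translate into a bound on how many $S_B$-vertices sit in $\llparenthesis x a_1\rrparenthesis$ or $\llparenthesis a_1 b_0\rrparenthesis$, and the endpoint values of $w$ are genuinely not determined by the hypotheses (one can build admissible configurations with $w(i_x)<0$). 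Second, even granting a sign change across the block, the transition rules cut both ways: a zero reached from \emph{below} (negative values rising) lands on $W_2$ by rule (2), i.e.\ it produces a linkable pair whose $S_A$-endpoint is the \emph{antipodal} vertex, outside $\llbracket x a_1\rrbracket$. So the intermediate-value mechanism you invoke can only deliver the required $W_1$-zero if you control the direction of approach, which again requires the missing endpoint analysis. The zeros of $w$ on $W_1$ and $W_2$ alternate, and localizing a $W_1$-zero in a short prescribed sub-arc is strictly stronger than anything Lemma \ref{la_linkable_pairs_of_two_stars} provides.

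For comparison, the paper avoids signs and counts entirely. It uses minimality only qualitatively: since $xy$ is minimal, $S_B$ has consecutive edges $z_0z_1$, $z_1z_2$ with $z_0\in\llparenthesis xy\rrparenthesis$ and $z_1,z_2\in\llparenthesis yx\rrparenthesis$. Then, assuming no linkable pair $(a_{2i-1},b)$ exists, it proves by induction on $j$ that each pair $\{a_{2i},a_{2i+1}\}$ for a suitable $i$ is trapped in $\llparenthesis b_{2j+1}b_{2j}\rrparenthesis$; at $j=\ell-1$ this forces $b_{2\ell}=z_2$ into $\llparenthesis xy\rrparenthesis$, contradicting the placement of $z_2$. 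If you want to salvage your approach, you would need to prove an endpoint-sign lemma of the form ``$w(i_x)\ge 0$, and $w$ attains a nonpositive value at some $W_1\cup W_2$ index of the block before it can return to positive values,'' deriving it from the same qualitative consequence of minimality the paper uses; as written, your argument does not get there, and the dichotomy ``$w(i_x)=0$ or the endpoint signs are opposite'' is not established (nor true in general).
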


\begin{proof}
By Observation \ref{obs_linkable_observation}, we may assume that $V(S_A) \cap V(S_B) = \emptyset$ and $b_0 \in \llparenthesis a_1 y \rrparenthesis$.
Let $a_0 a_1$, $a_1 a_2$, $\ldots$, $a_{2k-1} a_{2k}$ be edges of $S_A$, where $a_{2k-1} = x$ and $a_{2k} = y$.
For $S_B$, since $xy$ is minimal, we have edges $z_0 z_1$, $z_1 z_2$ of $S_B$ such that $z_0 \in \llparenthesis xy \rrparenthesis$ and $\{ z_1, z_2 \} \subset \llparenthesis yx \rrparenthesis$.
Let $b_0 b_1$, $b_1 b_2$, $\ldots$, $b_{2\ell-1} b_{2\ell}$ be edges of $S_B$, where $b_{2\ell-2} = z_0$, $b_{2\ell-1} = z_1$, $b_{2\ell} = z_2$ (Fig. \ref{fig_find_linkable_pair}).

\begin{figure}[h]
\centering
\includegraphics[scale=0.3]{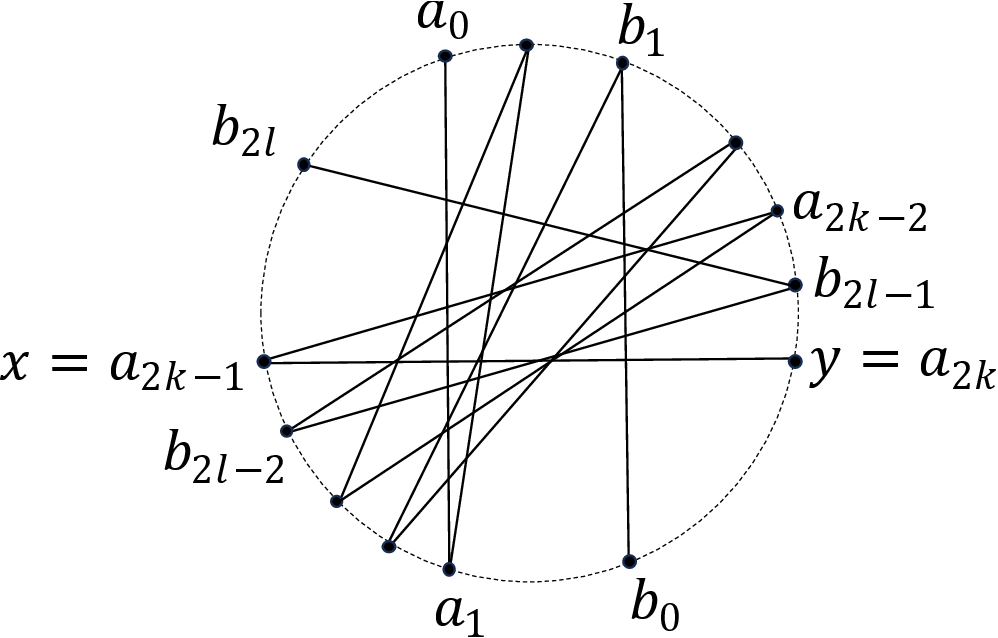}\\
\caption{Vertices and edges of Lemma \ref{la_find_linkable_pair}.}
\label{fig_find_linkable_pair}
\end{figure}

Then we have $a_1, a_3, \ldots, a_{2k-1}=x$ are arranged in clockwise order in $\llbracket xa_1 \rrbracket$, 
and $a_0, a_2, \ldots, a_{2k}=y$ are arranged in clockwise order in $\llbracket ya_0 \rrbracket$.
In the same way, we have $b_1, b_3, \ldots, b_{2\ell-1}$ are arranged in clockwise order in $\llparenthesis yb_1 \rrbracket$, 
and $b_0, b_2, \ldots, b_{2\ell-2}$ are arranged in clockwise order in $\llparenthesis xb_0 \rrbracket$.

Suppose to a contradiction that there exists no linkable pair $(a, b) \in V(S_A) \times V(S_B)$ such that $a = a_{2i-1}$ with some $i$ for $1 \le i \le k$.
\medskip\\
{\bf Claim.} %
For all $j$ with $0 \le j \le \ell-1$, there exists some $i$ with $0 \le i \le k-1$ such that $\{ a_{2i}, a_{2i+1} \} \subset \llparenthesis b_{2j+1} b_{2j} \rrparenthesis$.

\begin{proof}
We proceed by induction on $j$ to prove the claim.
For $j=0$, by the assumption of the lemma, we have $\{ a_0, a_1 \} \subset \llparenthesis b_1 b_0 \rrparenthesis$.
For $j \ge 1$, let $\alpha$ be the largest index $i$ such that $\{ a_{2i}, a_{2i + 1} \} \subset \llparenthesis b_{2j-1} b_{2j-2} \rrparenthesis$.
We have $a_{2\alpha+2} \in \llparenthesis y b_{2j-1} \rrparenthesis$ by the maximality of $\alpha$.
If $b_{2j} \in \llparenthesis a_{2\alpha} a_{2\alpha + 1} \rrparenthesis$, we have a linkable pair $(a_{2\alpha + 1}, b_{2j-1})$, a contradiction.
Hence, we have $b_{2j} \in \llparenthesis a_{2\alpha+1} b_{2j-2} \rrparenthesis$.
Therefore, we have $\{ a_{2\alpha}, a_{2\alpha + 1} \} \subset \llparenthesis b_{2j+1} b_{2j} \rrparenthesis$, as claimed. 
\end{proof}

Applying the claim for $j=\ell-1$, let $\beta$ be the largest index $i$ such that $\{ a_{2i}, a_{2i+1} \} \subset \llparenthesis b_{2\ell - 1} b_{2\ell - 2} \rrparenthesis$.
Since $a_{2\beta+1}$ and $b_{2\ell - 1}$ are not linkable, we have $b_{2\ell} \in \llparenthesis a_{2\beta+1} b_{2\ell-2} \rrparenthesis \subset \llparenthesis xy \rrparenthesis$, which contradicts that $b_{2\ell} \in \llparenthesis yx \rrparenthesis$.
\end{proof}

\begin{proof}[Proof of Proposition~{\upshape\ref{prop_yield_border_star}}]
Let $S_0 \in \mathcal{S}$ be a star which contains $uv$ as its edge.
Put $v_0 = u$ and $v_1 = v$.
Note that $S_0 \in \mathcal{S}_{v_1}$.
%

For a star $S \in \mathcal{S}_{v_1}$ with $E(S) = \{ u_0 u_1, u_1 u_2 , \ldots, u_{2\ell-1} u_{2\ell}, u_{2\ell} u_0 \}$, where $u_0=v_0$ and $u_1 = v_1$, let us assign a sequence of angles $\theta_i$ for $1 \le i \le 2\ell$ such that $\theta_i = \angle u_{i-1} u_i u_i^{+} - \angle u_{i-1} u_i u_{i+1}$ for $i$ odd, and $\theta_i = \angle u_{i-1} u_i u_{i+1}$ for $i$ even, where $u_{2\ell + 1} = u_0$.
We call $(\theta_1, \theta_2, \ldots, \theta_{2\ell})$ the angle sequence of $S$.   

Let $\mathcal{S}_1$ be a star decomposition reachable from $\mathcal{S}$ by a finite sequence of diagonal flips such that $\mathcal{S}_1$ contains a star $S_1 \in \mathcal{S}_{v_1}$, where the angle sequence of $S_1$ is as small as possible in the lexicographic order.

\begin{figure}[h]
\centering
\includegraphics[scale=0.3]{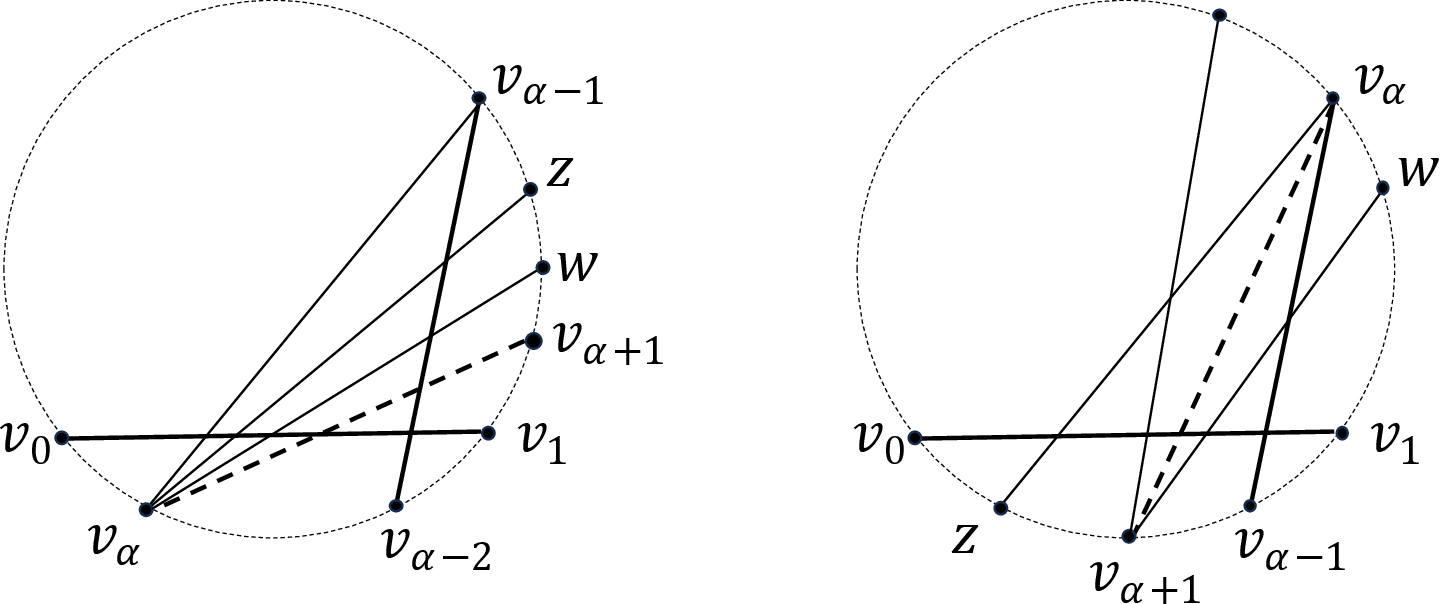}\\
\caption{Vertices and edges of the proof of Proposition \ref{prop_yield_border_star}
 in the case where $\alpha$ is odd(left) and $\alpha$ is even(right).
}
\label{fig_yield_border_star}
\end{figure}

It suffices to show that $S_1$ is the border star $S^{\ast}(P, v_1)$.
We assume that $S^{\ast}(P, v_1)$ is a $k$-star having $2k+1$ edges $v_0 v_1, v_1 v_2, \ldots, v_{2k-1} v_{2k}, v_{2k} v_0$.
Suppose to a contradiction that $S_1 \ne S^{\ast}(P, v_1)$.
Let us define $\alpha \in [2, 2k]$ such that $v_0 v_1, v_1 v_2, \ldots, v_{\alpha-1} v_{\alpha}$ are edges of $S_1$, and $v_{\alpha} v_{\alpha + 1} \not\in E(S_1)$, where $v_{2k + 1} = v_0$. 
Let $z$ be a vertex with $v_\alpha z \in E(S_1)$ (Fig. \ref{fig_yield_border_star}).

We consider two cases with respect to the parity of $\alpha$.
\medskip\\
\underline{Case $1$.} $\alpha$ is odd.

Since $v_{\alpha} v_{\alpha + 1} \in E(P)$, we have $z \in \llparenthesis v_{\alpha + 1} v_{\alpha - 1} \rrparenthesis$.

Let $S'$ be a star in $\mathcal{S}_1$ containing $zv_{\alpha}$ and $v_{\alpha} w$, where $w \in \llbracket v_{\alpha+1} z \rrparenthesis$.
By applying Lemma \ref{la_find_linkable_pair} with $S_1 = S_A$, $v_{\alpha-1} = a_0$, $v_{\alpha} = a_1$, $v_0 = x$, $v_1 = y$, and $S' = S_B$, $v_{\alpha} = b_0$, $w = b_1$, we have a linkable pair $(u, v) \in V(S_1) \times V(S')$ such that $u \in \llbracket v_0 v_{\alpha} \rrbracket$.
Then, we can flip a diagonal $\overline{v_{\alpha} z}$ to $\overline{uv}$, and one of resulting stars contains $v_0 v_1, v_1 v_2, \ldots, v_{\alpha} w$, a contradiction to the minimality of the angle sequence of $S_1$.
\medskip\\
\underline{Case $2$.} $\alpha$ is even.

Let $S'$ be a star in $\mathcal{S}_1$ having a linkable pair $(v_{\alpha + 1},  v_{\alpha}) \in V(S') \times V(S_1)$.
We may assume that $S'$ contains $v_{\alpha + 1}w$, where $w \in \llparenthesis v_1 v_{\alpha} \rrparenthesis$.
By applying Lemma \ref{la_find_linkable_pair} with $S_1 = S_A$, $v_{\alpha} = a_0$, $z = a_1$, $v_0 = x$, $v_1 = y$, and $S' = S_B$, $v_{\alpha + 1} = b_0$, $w = b_1$, we have a linkable pair $(u, v) \in V(S_1) \times V(S')$ such that $u \in \llbracket v_0 z \rrbracket$.
Then we have multiple linkable pairs for $S_1$ and $S'$, which contradicts  that $\mathcal{S}$ is maximal.
\end{proof}

Theorem \ref{thm_star_decomposition} follows from Proposition \ref{prop_yield_border_star}.

\begin{proof}[Proof of Theorem~{\upshape\ref{thm_star_decomposition}}]
We proceed by induction on the number of vertices.
Let $\mathcal{S}_i(V,E,D_i)$ for $i=1,2$.
By Proposition \ref{prop_yield_border_star}, we have a star decomposition $\mathcal{S}'_i$ for $i=1,2$ such that $\mathcal{S}_i$ is transformed into $\mathcal{S}'_i$ by a finite sequence of diagonal flips, and $\mathcal{S}'_i$ contains $S^{\ast}(P,v)$ for some vertex $v$.
Let us define a vertex set $V'$ and an edge set $E'$ as follows;\\
$E' = (E(P) \setminus E(S^{\ast}(P,v))) \cup \{ xy \,:\, yx \in E(S^{\ast}(P,v)) \setminus E(P) \}$, \\
$V' = V \setminus \{ v \in V \,:\, {\rm ~no~edge~of~}E'{\rm ~has~}v{\rm ~as~an~endvertex~} \}$.

Then we have that $V' = \emptyset$ and $E' = \emptyset$, or $P(V',E')$ is a cyclic polygon such that $P(V', E')$ admits a star decomposition $\mathcal{S}''_i = \mathcal{S}'_i \setminus \{ S^{\ast}(P,v) \}$ for $i=1,2$.
Since $v$ is not contained in $V'$, we have $|V'| < |V|$.
Hence, by inductive hypothesis, $\mathcal{S}''_1$ can be transformed into $\mathcal{S}''_2$, and this completes the proof. 
\end{proof}

Recall that $P^k_n$ denotes a cyclic polygon with $n$ vertices in which length of all the edges are $k$. 
As is mentioned in Section $1$, $P^k_n(V,E,D)$ admits a maximal $k$-star decomposition if and only if $D$ is $k$-saturated.  
Let $\mathcal{S}$ be a maximal star decomposition of $P_n^k$ with respect to a set of diagonals $D$.
By Theorem \ref{thm_star_decomposition}, $\mathcal{S}$ can be transformed from a specific $k$-star decomposition $\mathcal{S}_0$ with respect to $D_0$, where $D_0$ is $k$-saturated.
Hence, we have $D$ is also $k$-saturated and $\mathcal{S}$ is a $k$-star decomposition.

\begin{prop}\label{prop_k_triangulation}
All maximal star decompositions of $P^k_n$ are $k$-star decompositions.
\qed
\end{prop}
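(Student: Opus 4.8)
The plan is to argue by induction along a sequence of diagonal flips, proving that the property of being a $k$-star decomposition is preserved by every single flip, and then to invoke the global flip-connectivity supplied by Theorem \ref{thm_star_decomposition}. Concretely, let $\mathcal{S}(V,E,D)$ be an arbitrary maximal star decomposition of $P^k_n$. By the theorem of Pilaud and Santos \cite{PS2009}, there is at least one $k$-saturated set $D_0$, and the corresponding (unique, via $f_{\rm next}$) decomposition $\mathcal{S}_0$ is a maximal $k$-star decomposition of $P^k_n$; in particular $\mathcal{S}_0$ is maximal in the sense of this paper. By Theorem \ref{thm_star_decomposition}, $\mathcal{S}$ is reachable from $\mathcal{S}_0$ by a finite sequence of diagonal flips, each intermediate decomposition being a maximal star decomposition by Theorem \ref{thm_flip_uniqueness}. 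Hence it suffices to show that a single flip carries a $k$-star decomposition to a $k$-star decomposition.

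The crux is this preservation claim: if $\mathcal{S}'(V,E,D')$ is a $k$-star decomposition and $e\in D'$, then the flip furnished by Theorem \ref{thm_flip_uniqueness}, replacing $e$ by the unique diagonal $f$, again yields a $k$-star decomposition. The key point is that the two relevant uniqueness statements describe the \emph{same} flip. On one hand, Theorem \ref{thm_flip_uniqueness} produces a unique $f\notin D'$ for which $(D'\setminus\{e\})\cup\{f\}$ supports a maximal star decomposition. On the other hand, the classical flip theorem for $k$-triangulations \cite{Nak00, PS2009} produces a unique diagonal $f_0\notin D'$ for which $(D'\setminus\{e\})\cup\{f_0\}$ supports a $k$-star decomposition of $P^k_n$. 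Since $|(D'\setminus\{e\})\cup\{f_0\}|=|D'|$ is again the $k$-saturated cardinality $k(n-2k-1)$, this $k$-star decomposition is itself a maximal star decomposition, and so it is a legitimate image of the flip of Theorem \ref{thm_flip_uniqueness}. By the uniqueness asserted there, $f=f_0$, so the flip indeed lands on the $k$-star decomposition determined by $f_0$. An induction on the length of a flip path from $\mathcal{S}_0$ to $\mathcal{S}$ then shows that $\mathcal{S}$ is a $k$-star decomposition, which is the assertion of the proposition.

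I expect the only delicate point -- the main obstacle -- to be the bookkeeping that every $k$-star decomposition occurring in the argument is genuinely maximal in this paper's sense, so that Theorems \ref{thm_flip_uniqueness} and \ref{thm_star_decomposition} apply and the two uniqueness claims can legitimately be identified. This rests on the biconditional of \cite{PS2009} ($k$-saturation $\Leftrightarrow$ maximal $k$-star decomposition) together with the constant-cardinality count $|D|=k(n-2k-1)$: a $k$-star decomposition attains the maximal possible number of diagonals, hence cannot be properly extended and is maximal. One must also confirm that the replacement diagonal $f_0$ keeps the diagonal set $k$-saturated, which follows because the cardinality is unchanged by a flip and the resulting configuration is by construction a $k$-star decomposition.

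As a supporting observation, it is worth recording at the outset that $r(P^k_n)=k$, so by Lemma \ref{la_num_stars} every star decomposition of $P^k_n$ has exactly $n-2k$ stars; this fixes the number of stars throughout the flip sequence and confirms the consistency of the counts $\sum_i m_i=|D|+k=k(n-2k)$ used above. This rotation-number computation is routine and the fixed star count is not logically required for the induction, but it makes transparent why the flips stay within a single combinatorial type, namely decompositions into $n-2k$ stars each of which the argument forces to be a $k$-star.
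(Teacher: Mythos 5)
Your proof is correct and follows essentially the same route as the paper: the paper also deduces the proposition by taking a $k$-saturated $D_0$ with its $k$-star decomposition $\mathcal{S}_0$ (via \cite{PS2009}) and invoking Theorem \ref{thm_star_decomposition} to connect $\mathcal{S}$ to $\mathcal{S}_0$ by flips, concluding that $D$ remains $k$-saturated. The only difference is one of exposition: the paper leaves the flip-by-flip preservation implicit, whereas you spell out the key identification -- that the unique flip of Theorem \ref{thm_flip_uniqueness} must coincide with the classical $k$-triangulation flip of \cite{Nak00, PS2009}, since a $k$-star decomposition has the extremal diagonal count $k(n-2k-1)$ and is therefore maximal -- which is exactly the justification the paper's ``Hence'' relies on.
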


We remark that a double star with two $k$-stars may be transformed by a diagonal flip into another double star which contains an $\ell$-star with $\ell \ne k$ in general. (For example, see Fig. \ref{fig_diagonal_flip}.) 

\section{Further Discussions}

\subsection{Existence of a star decomposition}

\begin{figure}[h]
\centering
\includegraphics[scale=0.3]
{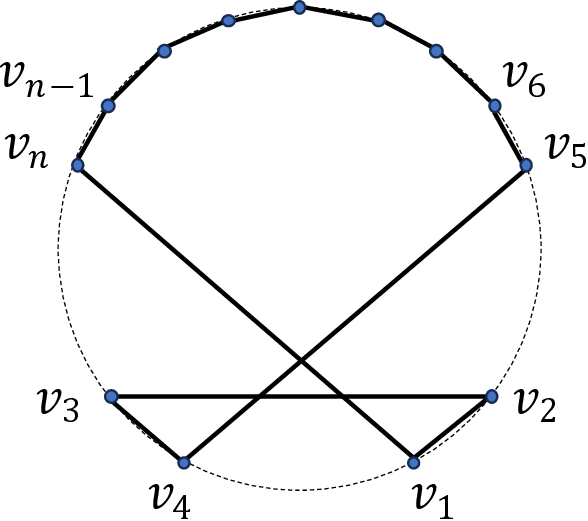}\\
\caption{A cyclic polygon which admits no star decomposition with $p-(n-2r)(n-2r-1)/2 = n-7 \ge 0$ for $n \ge 7$, where $r=2$.}
\label{fig_example_no_star_decomposition}
\end{figure}

If a cyclic polygon $P$ admits a star decomposition, by applying Theorem \ref{thm_num_diagonals}, we have $p - (n-2r)(n-2r-1)/2 \ge 0$, where $n$ is the number of vertices of $P$, $p$ is the number of linkable pairs of $P$, and $r$ is the rotation number of $P$, 
 because the left-hand side of the inequality equals the number of diagonals contained in a maximal star decomposition of $P$.
On the other hand, there exists a cyclic polygon having no star decomposition while satisfying $p - (n-2r)(n-2r-1)/2 \ge 0$.
(For example, see Fig. \ref{fig_example_no_star_decomposition}.)

Although whether a given cyclic polygon $P$ admits a star decomposition or not is determined algorithmically by trying to remove a border star one by one, it is desirable to clarify a necessary and sufficient condition for $P$ to admit a star decomposition.

\subsection{The number of star decompositions}

For a given cyclic polygon $P$, let $f(P)$ be the number of maximal star decompositions of $P$.
It is well-known that if $P$ is a convex $n$-gon, $f(P) = C_{n-2}$, where $C_n$ is the $n$-th Catalan number.
It is also known that $f(P_n^k) = \det (C_{n-i-j})_{1 \le i, j \le k}$ \cite{Jon05}, and there exists an explicit bijection between a family of star decompositions of $P_n^k$ and a family of $k$-fans of Dick paths of length $2(n-2k)$ \cite{SS2012}.   
It may be an interesting research topic to determine $f(P)$ for a given family of cyclic polygons.

\section*{Declarations}
The authors have no relevant financial or non-financial interests to disclose.



\end{document}